\newcommand{\lyxmathsym}[1]{\ifmmode\begingroup\def\b@ld{bold}
  \text{\ifx\math@version\b@ld\bfseries\fi#1}\endgroup\else#1\fi}
\numberwithin{equation}{section}
\numberwithin{figure}{section}
\theoremstyle{plain}
\newtheorem{thm}{\protect\theoremname}[section]
  \theoremstyle{plain}
  \newtheorem{lem}[thm]{\protect\lemmaname}
  \theoremstyle{definition}
  \newtheorem{defn}[thm]{\protect\definitionname}
  \theoremstyle{plain}
  \newtheorem{prop}[thm]{\protect\propositionname}
  \theoremstyle{plain}
  \theoremstyle{definition}
  \providecommand{\corollaryname}{Corollary}
  \providecommand{\definitionname}{Definition}
  \providecommand{\examplename}{Example}
  \providecommand{\lemmaname}{Lemma}
  \providecommand{\propositionname}{Proposition}
\providecommand{\theoremname}{Theorem}
\begin{document}

\centerline{\bf Conformal Spectral Stability Estimates for the Dirichlet Laplacian}

\vskip 0.3cm
\centerline{\bf V.~I.~ Burenkov,*\, V.~Gol'dshtein, A.~Ukhlov**}
\vskip 0.3cm
{\small
{\it Address and affiliations:}

V.~I.~ Burenkov: Professor,
Peoples' Friendship University of Russia: Russia, Moscow, 5 Mikluho-Maklay St., Steklov Mathematical Institute: Russia, Moskow, 8 Gubkin St.
E-mail: Burenkov@cardiff.ac.uk

V.~Gol'dshtein: Professor,
Ben-Gurion University of the Negev: Israel, 84105, Beer-Sheva, P.O. Box 653.
E-mail: vladimir@math.bgu.ac.il

A.~Ukhlov: Associate Professor,
Ben-Gurion University of the Negev: Israel, 84105, Beer-Sheva, P.O. Box 653.
972-8-6477830. E-mail: ukhlov@math.bgu.ac.il
}

\vskip 0.3cm

\centerline{\small ABSTRACT}
\vskip 0.1cm
{\small 
We study the eigenvalue problem for the Dirichlet Laplacian in bounded simply connected plane domains $\Omega\subset\mathbb{C}$
using conformal transformations of the original problem to the weighted eigenvalue problem for the Dirichlet Laplacian
in the unit disc $\mathbb{D}$. This allows us to estimate the variation of the eigenvalues of the Dirichlet Laplacian upon domain perturbation via energy type integrals for a large class of "conformal regular" domains which includes all quasidiscs, i.e. images of the unit disc under  quasiconformal homeomorphisms of the plane onto itself. Boundaries of such domains can have any Hausdorff dimension between one and two.
}
\vskip 0.1cm



\footnotetext{{\bf Key words and phrases:} eigenvalue problem, elliptic equations, conformal mappings, quasidiscs.}
\footnotetext{{\bf 2010 Mathematics Subject Classification:} 35P15, 35J40, 47A75, 47B25.}

\footnotetext{* The author was partially supported by the Center for Advanced Studies in Mathematics at the Ben-Gurion University of the Negev and by the Russian Scientific Foundation (project 14-11-00443).}
\footnotetext{** The corresponding author.}

\section{Introduction }

This paper is devoted to stability estimates for the  eigenvalues of the Dirichlet Laplacian
$$
-\Delta f = -\Big(\frac{\partial^2 f}{\partial x^2}+\frac{\partial^2 f}{\partial y^2}\Big),\,\,\, (x,y)\in \Omega,\,\,\,\,\, f\vert_{\partial\Omega}=0.
$$

It is known that in a bounded plane domain $\Omega\subset\mathbb{C}$ the spectrum of the Dirichlet Laplacian is discrete and can be written in the form
of a non-decreasing sequence
\[
0<\lambda_{1}[\Omega]\leq\lambda_{2}[\Omega]\leq...\leq\lambda_{n}[\Omega]\leq...\,,
\]
where each eigenvalue is repeated as many times as its multiplicity.

In the last two decades, spectral stability estimates for the Dirichlet Laplacian were intensively studied. See, for example, \cite{P, D, LL, BLL, BL1, BL2, LP, BBL, BL3},
where the quantity
$|\lambda_{n}[\Omega_{1}]-\lambda_{n}[\Omega_{2}]|$, under certain assumptions on regularity of the domains $\Omega_1$ and $\Omega_2$,
was estimated via various characteristics of the closeness of $\Omega_1$ and $\Omega_2$ such as the so-called atlas distance between
$\Omega_1$ and $\Omega_2$, the Hausdorff-Pompeiu distance between the boundaries $\partial\Omega_1$ and $\partial\Omega_2$,
the Lebesgue measure of the symmetric difference of $\Omega_1$ and $\Omega_2$.

If $\varphi_1$ and $\varphi_2$ are Lipschitz mappings such that $\Omega_1=\varphi_1(\mathbb D)$ and $\Omega_2=\varphi_2(\mathbb D)$, where $\mathbb D\subset\mathbb C$ is the unit disc, the dependence of $|\lambda_{n}[\Omega_{1}]-\lambda_{n}[\Omega_{2}]|$ on the closeness of the mappings $\varphi_1$ and $\varphi_2$ was investigated in \cite{LL}. See also \cite{BL1, BL2} and survey paper \cite{BLL}, where one can find references to other related results.

Let, for $\tau>0$, $F_\tau$ be the set of all mappings $\varphi$ of the unit disc $\mathbb D$ in the Sobolev class $L^{1,\infty}(\mathbb D)$ such that
$$
\|\varphi\mid {L^{1,\infty}(\mathbb D)}\|\le\tau\,,\,\,\,\,
{\begin{array}{c}\\{\rm ess~inf}\\{\mathbb D}\end{array}}\,|{\rm det}\,\nabla\varphi|\ge\frac1\tau\,.
$$
\begin{thm}\cite{LL}
For any $\tau>0$ there exists $A_\tau>0$ such that for any $\varphi_1,\varphi_2\in F_\tau$ and for any $n\in\mathbb N$
\begin{equation}\label{estimate 1}
|\lambda_{n}[\Omega_{1}]-\lambda_{n}[\Omega_{2}]|\leq c_n A_\tau \|\varphi_1-\varphi_2\mid L^{1,\infty}(\mathbb D)\|\,,
\end{equation}
where $\Omega_{1}=\varphi_1(\mathbb D)$, $\Omega_{2}=\varphi_2(\mathbb D)$ and
\begin{equation}\label{c_n}
c_{n}=\max\{\lambda_{n}^{2}[\Omega_{1}],\lambda_{n}^{2}[\Omega_{2}]\}\,.
\end{equation}
\end{thm}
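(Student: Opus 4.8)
The plan is to pull both eigenvalue problems back to the common reference domain $\mathbb{D}$ by the substitutions $y=\varphi_i(x)$, reducing the comparison of $\lambda_n[\Omega_1]$ and $\lambda_n[\Omega_2]$ to a perturbation estimate for two weighted eigenvalue problems on the \emph{same} Sobolev space $W_0^{1,2}(\mathbb{D})$. Writing $u=f\circ\varphi_i$ and using the change-of-variables formula for the bi-Lipschitz homeomorphism $\varphi_i$, the Dirichlet integral and the $L^2$-norm of an eigenfunction $f$ on $\Omega_i$ become $\int_{\mathbb{D}}\langle A_i\nabla u,\nabla u\rangle\,dx$ and $\int_{\mathbb{D}} w_i\,|u|^2\,dx$, where $w_i=|\det\nabla\varphi_i|$ and $A_i=w_i\,((\nabla\varphi_i)^T\nabla\varphi_i)^{-1}$. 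Thus $\lambda_n[\Omega_i]$ is the $n$-th eigenvalue of the generalized problem $-\mathrm{div}(A_i\nabla u)=\lambda\,w_i\,u$ in $\mathbb{D}$ with the Dirichlet condition; the first point to verify is that this substitution is legitimate at the level of the form domains, i.e.\ that $f\mapsto f\circ\varphi_i$ is an isomorphism of $W_0^{1,2}(\Omega_i)$ onto $W_0^{1,2}(\mathbb{D})$, which holds precisely because $\varphi_i$ is bi-Lipschitz.

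Second, I would extract from the definition of $F_\tau$ uniform bounds depending only on $\tau$. The constraints $\|\varphi_i\mid L^{1,\infty}(\mathbb{D})\|\le\tau$ and $\operatorname*{ess\,inf}_{\mathbb{D}}|\det\nabla\varphi_i|\ge 1/\tau$ confine $\nabla\varphi_i(x)$ a.e.\ to a fixed compact subset of $\{M:|\det M|\ge 1/\tau\}$, on which $A_i$ and $w_i$ are rational functions of the entries of $\nabla\varphi_i$ with non-vanishing denominators. Consequently there are constants $\nu_\tau,N_\tau>0$ with $\nu_\tau I\le A_i\le N_\tau I$ and $\tau^{-1}\le w_i\le N_\tau$ (uniform ellipticity and uniform bounds on the weight), and, since these rational maps are Lipschitz on the above compact set, $\|A_1-A_2\|_{L^\infty}+\|w_1-w_2\|_{L^\infty}\le C_\tau\,\|\nabla\varphi_1-\nabla\varphi_2\|_{L^\infty}=C_\tau\,\|\varphi_1-\varphi_2\mid L^{1,\infty}(\mathbb{D})\|$.

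Third --- and this is the heart of the matter --- I would convert the comparison of eigenvalues into the operator-norm comparison of the inverse (solution) operators. Let $T_i$ be the solution operator of $-\mathrm{div}(A_i\nabla u)=w_i g$; it is compact, self-adjoint and positive on $(L^2(\mathbb{D}),b_i)$ with $b_i(u,v)=\int_{\mathbb{D}} w_i uv$, and its eigenvalues are $1/\lambda_n[\Omega_i]$. Since the weights are comparable, the unitary maps $U_i u=w_i^{1/2}u$ conjugate $T_i$ to self-adjoint compact operators $\widehat T_i=w_i^{1/2}T_iw_i^{-1/2}$ on the single Hilbert space $L^2(\mathbb{D})$, with the same eigenvalues; the Courant--Fischer min-max principle then yields $|1/\lambda_n[\Omega_1]-1/\lambda_n[\Omega_2]|\le\|\widehat T_1-\widehat T_2\|_{\mathcal L(L^2(\mathbb{D}))}$, whence
\[
|\lambda_n[\Omega_1]-\lambda_n[\Omega_2]|=\lambda_n[\Omega_1]\lambda_n[\Omega_2]\,\big|1/\lambda_n[\Omega_1]-1/\lambda_n[\Omega_2]\big|\le c_n\,\|\widehat T_1-\widehat T_2\|_{\mathcal L(L^2(\mathbb{D}))},
\]
using $\lambda_n[\Omega_1]\lambda_n[\Omega_2]\le\max\{\lambda_n^2[\Omega_1],\lambda_n^2[\Omega_2]\}=c_n$. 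It then remains to bound the resolvent difference by a standard energy argument: writing $u_i$ for the solutions with the same right-hand side $w_i^{1/2}h$, subtracting the two equations and testing the resulting equation for $u_1-u_2$ against itself, one controls $\|\nabla(u_1-u_2)\|_{L^2}$ by $C_\tau(\|A_1-A_2\|_{L^\infty}+\|w_1-w_2\|_{L^\infty})\|h\|_{L^2}$ via uniform ellipticity, the energy bound $\|\nabla u_2\|_{L^2}\le C_\tau\|h\|_{L^2}$, the Poincar\'e inequality on $\mathbb{D}$, and the Lipschitz continuity of $w\mapsto w^{1/2}$ on $[1/\tau,N_\tau]$. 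This gives $\|\widehat T_1-\widehat T_2\|_{\mathcal L(L^2(\mathbb{D}))}\le C_\tau(\|A_1-A_2\|_{L^\infty}+\|w_1-w_2\|_{L^\infty})$, and combining with the second step produces \eqref{estimate 1} with $A_\tau=C_\tau$.

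Finally, I expect the principal obstacle to be this third step: passing to a common Hilbert space so as to compare operators carrying different weights $w_1,w_2$, and carrying out the resolvent-difference energy estimate with all constants made to depend on $\tau$ alone. The change-of-variables and uniform-bound steps are essentially bookkeeping once bi-Lipschitz composition in $W_0^{1,2}$ is invoked, whereas the spectral step is where the quadratic factor $c_n$ is produced and where uniformity over the entire class $F_\tau$ must be secured.
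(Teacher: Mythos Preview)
The paper does not prove this theorem; it is quoted from \cite{LL} and serves only as background against which the authors' new estimate (Theorem~1.2) is to be compared. There is therefore no ``paper's own proof'' to match your proposal against.

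That said, your outline is a correct and standard route to the cited result: the bi-Lipschitz change of variables legitimately identifies $W_0^{1,2}(\Omega_i)$ with $W_0^{1,2}(\mathbb{D})$; the constraints defining $F_\tau$ do confine $\nabla\varphi_i$ to a compact set away from the zero-determinant locus, so the coefficients $A_i$, $w_i$ are uniformly elliptic/bounded and depend Lipschitz-continuously on $\nabla\varphi_i$; and the resolvent/min-max argument producing the factor $c_n$ is exactly how one obtains the quadratic dependence on $\lambda_n$. This is essentially the strategy of \cite{LL}.

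It is worth noting, for contrast, that the paper's own spectral-stability argument (Lemma~\ref{lem:TwoWeight}) proceeds differently. Rather than comparing solution operators in operator norm, the authors work directly with the Rayleigh quotients: they bound $\lambda_n[h_1]$ from below by inserting the eigenspace $M_n^{(1)}$ for $h_1$ into the $h_2$-Rayleigh quotient and controlling the error through the hypothesis $\int|h_1-h_2|\,|f|^2\le B\int|\nabla f|^2$. This yields $\lambda_n[h_1]\ge \lambda_n[h_2]/(1+B\lambda_n[h_2])$ and, after symmetrizing, the estimate $|\lambda_n[h_1]-\lambda_n[h_2]|\le B\,\tilde c_n/(1+B\sqrt{\tilde c_n})$. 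The direct Rayleigh-quotient manipulation avoids introducing the solution operators and the conjugation $w_i^{1/2}$ altogether, and gives a slightly sharper constant; your resolvent approach is more flexible (it adapts readily to higher-order operators and to perturbations of the principal part $A_i$, which is needed for the non-conformal setting of Theorem~1.1) but carries a bit more machinery.
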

This theorem also holds if $\mathbb D$ is replaced by any open set $\Omega\subset\mathbb{R}^N, N\ge 2,$ such that the embedding $W_{0}^{1,2}(\Omega)\hookrightarrow L^{2}(\Omega)$ is compact \cite{LL}. In this case $A_\tau $ depends also on the Poincar\'e constant of $\Omega$.

In \cite{BrL1} (Theorem~6) the stability estimates based on summability assumptions on the gradients of the eigenfunctions were obtained. 

In this paper we consider {\it conformal regular} plane domains $\Omega\subset\mathbb{C}$. We call a bounded simply connected plane domain $\Omega\subset\mathbb{C}$  a conformal regular domain if there exists a conformal mapping $\varphi:\mathbb D\to\Omega$ in the Sobolev class $L^{1,p}(\mathbb D)$ for some $p>2$. Note that any conformal regular domain has finite geodesic diameter 
\cite{GU5} and can be characterized in the terms of the (quasi)\-hy\-per\-bolic boundary condition \cite{BP, KOT}. 
For such domains we improve estimate (\ref{estimate 1}).

Let, for $2<p\le\infty, \tau>0$, $G_{p,\tau}$ be the set of all conformal mappings $\varphi$ of the unit disc $\mathbb D$ of the Sobolev class $L^{1,p}(\mathbb D)$ such that
$$
\|\varphi\mid {L^{1,p}(\mathbb D)}\|\le\tau\,.
$$

The main result of this paper is

\begin{thm} For any $2<p\le\infty, \tau>0$ there exists $B_{p,\tau}>0$ such that for any $\varphi_1,\varphi_2\in G_{p,\tau}$ and for any $n\in\mathbb N$
\begin{equation}\label{estimate 2}
|\lambda_{n}[\Omega_{1}]-\lambda_{n}[\Omega_{2}]|
\leq c_n B_{p,\tau} \|\varphi_1-\varphi_2\mid L^{1,2}(\mathbb D)\|\,,
\end{equation}
where $\Omega_{1}=\varphi_1(\mathbb D)$, $\Omega_{2}=\varphi_2(\mathbb D)$.
\end{thm}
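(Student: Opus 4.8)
The plan is to exploit the conformal invariance of the Dirichlet energy in the plane so as to transfer both eigenvalue problems onto the \emph{fixed} Hilbert space $H:=W_0^{1,2}(\mathbb D)$ equipped with the Dirichlet inner product $\langle g,\psi\rangle_H=\int_{\mathbb D}\nabla g\cdot\nabla\psi\,dx\,dy$, so that only a weight, and not the underlying space, changes between the two problems. For a conformal map $\varphi_i:\mathbb D\to\Omega_i$ the substitution $f\mapsto g=f\circ\varphi_i$ is an isomorphism of $W_0^{1,2}(\Omega_i)$ onto $H$ which leaves the Dirichlet integral unchanged, $\int_{\Omega_i}|\nabla f|^2=\int_{\mathbb D}|\nabla g|^2$, while $\int_{\Omega_i}|f|^2=\int_{\mathbb D}|g|^2h_i$ with conformal weight $h_i:=|\varphi_i'|^2=|{\rm det}\,\nabla\varphi_i|$. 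Hence, by the variational min--max principle, $\lambda_n[\Omega_i]=\lambda_n[h_i]$, where $\lambda_n[h]$ denotes the $n$-th eigenvalue of the weighted problem $\int_{\mathbb D}\nabla g\cdot\nabla\psi=\lambda\int_{\mathbb D}g\psi\,h$ on $H$. Since $\varphi_i\in G_{p,\tau}$ we have $h_i\in L^{p/2}(\mathbb D)$ with $p/2>1$, and combined with the planar Sobolev embedding $H\hookrightarrow L^q(\mathbb D)$ ($q<\infty$) this guarantees that the bilinear form $b_h(g,\psi)=\int_{\mathbb D}g\psi\,h$ is bounded and compact on $H$; thus it is represented by a compact, self-adjoint, positive operator $S_h$ on $H$, with $b_h(g,\psi)=\langle S_hg,\psi\rangle_H$, whose eigenvalues are exactly $\mu_n[h]=1/\lambda_n[h]$.

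With the two problems now living on one and the same space, I would compare the operators $S_{h_1}$ and $S_{h_2}$. Their difference is represented by the form of the \emph{weight difference}, $\langle(S_{h_1}-S_{h_2})g,\psi\rangle_H=\int_{\mathbb D}g\psi\,(h_1-h_2)$. The Courant--Fischer characterization of the eigenvalues of a self-adjoint compact operator yields the Weyl-type perturbation bound $|\mu_n[h_1]-\mu_n[h_2]|\le\|S_{h_1}-S_{h_2}\|_{H\to H}$. Passing from the $\mu_n$ back to the $\lambda_n=1/\mu_n$ produces the announced spectral factor: indeed
\begin{equation*}
|\lambda_n[\Omega_1]-\lambda_n[\Omega_2]|=\lambda_n[h_1]\lambda_n[h_2]\,|\mu_n[h_1]-\mu_n[h_2]|\le c_n\,\|S_{h_1}-S_{h_2}\|_{H\to H},
\end{equation*}
using $\lambda_n[h_1]\lambda_n[h_2]\le\max\{\lambda_n^2[\Omega_1],\lambda_n^2[\Omega_2]\}=c_n$.

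It then remains to estimate $\|S_{h_1}-S_{h_2}\|_{H\to H}$ in terms of $\|\varphi_1-\varphi_2\mid L^{1,2}(\mathbb D)\|$. Set $r=\tfrac{2p}{p+2}$, so that $r\in(1,2]$ for $2<p\le\infty$, with conjugate $r'=\tfrac{2p}{p-2}$. By H\"older's inequality and Cauchy--Schwarz, $\big|\int_{\mathbb D}g\psi\,(h_1-h_2)\big|\le\|h_1-h_2\|_{L^r}\|g\|_{L^{2r'}}\|\psi\|_{L^{2r'}}$, and since $2r'=\tfrac{4p}{p-2}<\infty$ the Sobolev embedding gives $\|g\|_{L^{2r'}}\le C(p)\|g\|_H$; hence $\|S_{h_1}-S_{h_2}\|_{H\to H}\le C(p)^2\|h_1-h_2\|_{L^r(\mathbb D)}$. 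Finally I would bound the weight difference itself. Writing $h_1-h_2=|\varphi_1'|^2-|\varphi_2'|^2=(|\varphi_1'|-|\varphi_2'|)(|\varphi_1'|+|\varphi_2'|)$ and using $\big||\varphi_1'|-|\varphi_2'|\big|\le|\varphi_1'-\varphi_2'|$ followed by H\"older with exponents $2$ and $p$ (note $\tfrac1r=\tfrac12+\tfrac1p$),
\begin{equation*}
\|h_1-h_2\|_{L^r}\le\|\varphi_1'-\varphi_2'\|_{L^2}\,\big\||\varphi_1'|+|\varphi_2'|\big\|_{L^p}\le 2\tau\,\|\varphi_1'-\varphi_2'\|_{L^2},
\end{equation*}
where the last step uses $\|\varphi_i\mid L^{1,p}(\mathbb D)\|\le\tau$. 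Since $\varphi_1-\varphi_2$ is holomorphic, $\|\varphi_1'-\varphi_2'\|_{L^2}$ is comparable to $\|\varphi_1-\varphi_2\mid L^{1,2}(\mathbb D)\|$, and collecting the constants into a single $B_{p,\tau}$ completes the argument.

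The main obstacle, and the place where the hypothesis $p>2$ is essential, is the first step: justifying rigorously that a merely Sobolev-regular conformal map $\varphi_i\in L^{1,p}$ induces a bounded isomorphism of the Dirichlet spaces preserving the energy and generating the change-of-variables identity $\int_{\Omega_i}|f|^2=\int_{\mathbb D}|f\circ\varphi_i|^2h_i$, together with the resulting boundedness and compactness of the weighted form $b_{h_i}$ on the fixed space $H$. Once this reduction to a common Hilbert space carrying $L^{p/2}$-weights is secured, the perturbation inequality and the H\"older--Sobolev chain are routine; the condition $p>2$ enters precisely to keep $r>1$ and $2r'<\infty$, and its failure as $p\to2^+$ is reflected in the blow-up of the Sobolev constant $C(p)$, and hence of $B_{p,\tau}$.
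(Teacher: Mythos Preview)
Your proof is correct and follows essentially the paper's route: transfer both problems to weighted problems on $\mathbb D$ with $h_i=|\varphi_i'|^2$, compare eigenvalues via a min--max perturbation inequality, then bound $\|h_1-h_2\|_{L^{2p/(p+2)}}$ by H\"older--Sobolev and the factorization $h_1-h_2=(|\varphi_1'|+|\varphi_2'|)(|\varphi_1'|-|\varphi_2'|)$; the only cosmetic difference is that you package the perturbation step via Weyl's inequality for the compact operators $S_{h_i}$, whereas the paper argues directly on the Rayleigh quotients (its Lemma~3.1), obtaining the same bound $|\lambda_n[h_1]-\lambda_n[h_2]|\le c_nB$ with $B$ the best constant in $\int_{\mathbb D}|h_1-h_2|\,|f|^2\le B\int_{\mathbb D}|\nabla f|^2$. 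Your ``main obstacle'' is in fact not one: the energy isometry and the compactness of $W_0^{1,2}(\mathbb D)\hookrightarrow L^2(\mathbb D,h_i)$ hold for \emph{any} bounded simply connected $\Omega_i$ (the paper's Theorem~2.1, proved by the transfer diagram and the standard Rellich embedding on $\Omega_i$), so the hypothesis $p>2$ is used only at the H\"older--Sobolev step, exactly as you identify.
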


A more detailed formulation is given in Section 4 (see Theorem \ref{thm:EstimateEigenBase}). In Section~5 we consider in more detail the case in which
$\Omega_1$ and $\Omega_2$ are quasidiscs.

The estimate for $|\lambda_{n}[\Omega_{1}]-\lambda_{n}[\Omega_{2}]|$ can also be given in terms of the measure variation:
\begin{multline}
|\lambda_{n}[\Omega_{1}]-\lambda_{n}[\Omega_{2}]|\\
\leq  c_n B_{p,\tau} \Big(\left[{\rm meas}\,(\varphi_1(\mathbb D^+))-{\rm meas}\,(\varphi_2(\mathbb D^+))\right]+
\left[{\rm meas}\,(\varphi_2(\mathbb D^-))-{\rm meas}\,(\varphi_1(\mathbb D^-))\right]\Big)^{\frac12},
\nonumber
\end{multline}
where
\begin{equation}\label{D+-}
\mathbb D^+=\{z\in\mathbb D:J_{\varphi_1}(z)\geq J_{\varphi_2}(z) \}\,, ~~~\mathbb D^-=\{z\in\mathbb D:J_{\varphi_1}(z)< J_{\varphi_2}(z) \}
\end{equation}
and $J_{\varphi_1}, J_{\varphi_2}$ are the Jacobians of the mappings $\varphi_1$, $\varphi_2$ respectively.

Inequalities (\ref{estimate 1}) and (\ref{estimate 2}) hold for any $\varphi_1,\varphi_2$ under consideration, but they are non-trivial only if
$$\|\varphi_1-\varphi_2\mid L^{1,\infty}(\mathbb D)\|< (\sqrt{c_n} A_\tau)^{-1}\,,  ~~~\|\varphi_1-\varphi_2\mid L^{1,2}(\mathbb D)\|< (\sqrt{c_n} B_{p,\tau})^{-1}$$
respectively, because the inequality $|\lambda_{n}[\Omega_{1}]-\lambda_{n}[\Omega_{2}]|<\sqrt{c_n}$ obviously holds for any $\lambda_{n}[\Omega_{1}], \lambda_{n}[\Omega_{2}]$.

In this article we adopt an investigation method based on the theory of composition operators \cite{U1,VU1}.
Let $\Omega\subset\mathbb{C}$ be an arbitrary bounded simply connected plane
domain. Consider the eigenvalue problem
for the Dirichlet Laplacian in $\Omega$
\[
\begin{cases}
-\Delta_{w}g(w)=\lambda g(w),\,\, w\in\Omega,\\
\,\,\,\,\, g\vert_{\partial\Omega}=0\,,
\end{cases}
\]
where $$
\Delta_w=\left(\frac{\partial^2}{\partial u^2}\right)+\left(\frac{\partial^2}{\partial v^2}\right),\,\,\,w=u+iv\,.
$$
By the Riemann Mapping Theorem there exists a conformal mapping
$\varphi:\mathbb{D}\to\Omega$ from the unit disc $\mathbb{D}$ to
$\Omega$. Then, by the chain rule for the function $f(z)=g\circ\varphi(z)$,
we have
\begin{multline*}
\Delta_{z}f(z)=\Delta_{z}(g\circ\varphi(z))=(\Delta_{w}g)(\varphi(z))\cdot|\varphi'(z)|^{2}\\
=-\lambda g(\varphi(z))\cdot|\varphi'(z)|^{2}=-\lambda|\varphi'(z)|^{2}f(z).
\end{multline*}
 Here $\Omega\ni w=\varphi(z),\,\,\, z\in\mathbb{D}$. Hence we obtain
the weighted eigenvalue problem for the Dirichlet Laplacian in the unit
disc $\mathbb{D}$
\[
\begin{cases}
-\Delta f(z)=\lambda h(z)f(z),\,\, z\in\mathbb{D}\,,\\
\,\,\,\,\, f\vert_{\partial\mathbb D}=0\,,
\end{cases}
\]
 where
\begin{equation}\label{h}
h(z):=|\varphi'(z)|^{2}=J_{\varphi}(z)=\frac {\lambda_{\mathbb{D}}^{2}(z)}{\lambda_{\Omega}^{2}(\varphi(z))}
\end{equation}
 is the hyperbolic (conformal) weight defined by the conformal mapping
$\varphi:\mathbb{D}\to\Omega$. Here
$\lambda_{\mathbb{D}}$ and $\lambda_{\Omega}$ are hyperbolic metrics
in $\mathbb{D}$ and $\Omega$ respectively \cite{BM}.

This means that the eigenvalue problem in $\Omega$ is equivalent to
the weighted eigenvalue problem in the unit disc $\mathbb{D}$.

In the sequel we consider the weak formulation the weighted eigenvalue problem, namely:
\begin{equation}
\iint\limits _{\mathbb{D}}(\nabla f(z)\cdot\nabla\overline{{g(z)}})~dxdy=\lambda\iint\limits _{\mathbb{D}}h(z)f(z)\overline{{g(z)}}~dxdy,\,\,~~\forall g\in W_{0}^{1,2}(\mathbb{D}).\label{WEn}
\end{equation}

The method suggested to study the weighted eigenvalue problem
for the Dirichlet Laplacian is based on the theory of composition operators
\cite{U1,VU1} and the ``\,transfer\,'' diagram
suggested in \cite{GGu}. Universal hyperbolic weights for weighted
Sobolev inequalities were introduced in \cite{GU4} (see also \cite{LM1}).

\section{The weighted eigenvalue problem}

Let $\Omega\subset\mathbb{C}$ be an open set on the complex plane.
The Sobolev space $W^{1,p}(\Omega)$, $1\leq p\le\infty$, is
the normed space of all locally integrable weakly differentiable functions
$f:\Omega\to\mathbb{R}$ with finite norm given by
\begin{align*}
\|f\mid W^{1,p}(\Omega)\|=\biggr(\iint\limits _{\Omega}|f(z)|^{p}\, dxdy\biggr)^{1/p}+\biggr(\iint\limits _{\Omega}|\nabla f(z)|^{p}\, dxdy\biggr)^{1/p}, \,\,1\leq p<\infty,
\\
\|f\mid W^{1,\infty}(\Omega)\|={\begin{array}{c}\\{\rm ess~sup}\\{z\in\Omega}\end{array}} |f(z)|+{\begin{array}{c}\\{\rm ess~sup}\\{z\in\Omega}\end{array}} |\nabla f(z)|.
\end{align*}

The Sobolev space $W_{0}^{1,p}(\Omega)$, $1\leq p\le\infty$, is the closure in the  $W^{1,p}(\Omega)$-norm of the space $C_{0}^{\infty}(\Omega)$ of all infinitely continuously differentiable functions with compact support in $\Omega$.

The seminormed Sobolev space $L^{1,p}(\Omega)$, $1\leq p\le\infty$, is the space of all locally integrable weakly differentiable functions
$f:\Omega\to\mathbb{R}$ with finite seminorm given by
\begin{align*}
\|f\mid L^{1,p}(\Omega)\|=\biggr(\iint\limits _{\Omega}|\nabla f(z)|^{p}\, dxdy\biggr)^{1/p}, \,\,1\leq p<\infty,
\\
\|f\mid L^{1,\infty}(\Omega)\|={\begin{array}{c}\\{\rm ess~sup}\\{z\in\Omega}\end{array}} |\nabla f(z)|.
\end{align*}

The weighted Lebesgue space $L^{p}(\Omega,h)$, $1\leq p<\infty$,
is the space of all locally integrable functions with the finite
norm
$$
\|f\mid L^{p}(\Omega,h)\|=\bigg(\iint\limits _{\Omega}|f(z)|^{p}h(z)~dxdy\biggr)^{\frac{1}{p}}.
$$
Here the weight $h:\Omega\to\mathbb{R}$ is a non-negative measurable function.

We define the weighted Sobolev space $W^{1,p}(\Omega,h,1)$, $1\leq p<\infty$,
as the normed space of all locally integrable weakly differentiable functions
$f:\Omega\to\mathbb{R}$ with the finite norm given by
$$
\|f\mid W^{1,p}(\Omega,h,1)\|=\|f\mid L^p(\Omega,h)\|+\|\nabla f\mid L^p(\Omega)\|.
$$

The following is an embedding theorem taken from \cite{GU4} and reformulated for the present situation.

\begin{thm} \label{thm:BoundEm} Let  $\Omega\subset\mathbb{C}$ be a bounded simply connected
domain and $\varphi:\mathbb{D}\to\Omega$
be a conformal mapping.

Then the weighted embedding operator
\begin{equation}\label{embedding}
i_{\mathbb{D}}:W_{0}^{1,2}(\mathbb{D})\hookrightarrow L^{2}(\mathbb{D},h)
\end{equation}
is compact and for any function $u\in W_{0}^{1,2}(\mathbb{D})$ the
inequality
\[
\|f\mid L^{2}(\mathbb{D},h)\|\leq K^{*}\|f\mid L^{1,2}(\mathbb{D})\|
\]
holds.

Here $h$ is the hyperbolic
(conformal) weight defined by equality $(\ref{h})$. The exact constant $K^{*}=1/\sqrt{\lambda_1[\Omega]}$, i.~e. is equal to the exact
constant in the inequality
$$\|g\mid L^{2}(\Omega)\|\leq K\|g\mid L^{1,2}(\Omega)\|\,,~~~~
\forall g\in W_{0}^{1,2}(\Omega)\,.$$
\end{thm}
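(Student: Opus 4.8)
The plan is to transfer the unweighted embedding on $\Omega$ to the weighted embedding on $\mathbb{D}$ by means of the composition (pullback) operator induced by the conformal map $\varphi$, exploiting the conformal invariance of the Dirichlet integral in dimension two. Concretely, I would introduce the pullback $\varphi^{*}:g\mapsto g\circ\varphi$ together with its inverse $(\varphi^{-1})^{*}:f\mapsto f\circ\varphi^{-1}$, where $\varphi^{-1}:\Omega\to\mathbb{D}$ is again conformal. Since $\varphi$ is a biholomorphism, for $g\in C_{0}^{\infty}(\Omega)$ the set $\varphi^{-1}(\operatorname{supp}g)$ is compact in $\mathbb{D}$ and $g\circ\varphi$ is smooth (because $\varphi'\neq0$ everywhere), so $\varphi^{*}$ maps $C_{0}^{\infty}(\Omega)$ into $C_{0}^{\infty}(\mathbb{D})$; the same argument applies to $(\varphi^{-1})^{*}$.

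The two key identities come from the change of variables $w=\varphi(z)$, whose Jacobian is $J_{\varphi}(z)=|\varphi'(z)|^{2}=h(z)$. Using the conformal chain rule $|\nabla(g\circ\varphi)(z)|^{2}=|\nabla g(\varphi(z))|^{2}\,|\varphi'(z)|^{2}$ I obtain
\[
\|g\circ\varphi\mid L^{1,2}(\mathbb{D})\|=\|g\mid L^{1,2}(\Omega)\|,
\]
so the Dirichlet seminorm is preserved, while from $h(z)=|\varphi'(z)|^{2}$ directly,
\[
\|g\circ\varphi\mid L^{2}(\mathbb{D},h)\|=\|g\mid L^{2}(\Omega)\|.
\]
By density of $C_{0}^{\infty}$ these extend to show that $\varphi^{*}$ is an isometric isomorphism of $\bigl(W_{0}^{1,2}(\Omega),\|\cdot\mid L^{1,2}(\Omega)\|\bigr)$ onto $\bigl(W_{0}^{1,2}(\mathbb{D}),\|\cdot\mid L^{1,2}(\mathbb{D})\|\bigr)$ which simultaneously intertwines $\|\cdot\mid L^{2}(\Omega)\|$ with the weighted norm $\|\cdot\mid L^{2}(\mathbb{D},h)\|$. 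This is exactly the transfer diagram of \cite{GGu}: $i_{\mathbb{D}}=\varphi^{*}\circ i_{\Omega}\circ(\varphi^{*})^{-1}$, where $i_{\Omega}:W_{0}^{1,2}(\Omega)\hookrightarrow L^{2}(\Omega)$ is the ordinary embedding.

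With the diagram in hand both assertions follow immediately. For compactness: since $\Omega$ is bounded, $i_{\Omega}$ is compact (extend by zero to a ball and invoke Rellich--Kondrachov), and conjugating a compact operator by the bounded isomorphisms $\varphi^{*}$ and $(\varphi^{*})^{-1}$ preserves compactness, so $i_{\mathbb{D}}$ is compact. For the exact constant: writing $f=g\circ\varphi$, the identities give
\[
\|f\mid L^{2}(\mathbb{D},h)\|=\|g\mid L^{2}(\Omega)\|\leq K\,\|g\mid L^{1,2}(\Omega)\|=K\,\|f\mid L^{1,2}(\mathbb{D})\|,
\]
where $K$ is the sharp constant in the Friedrichs--Poincar\'e inequality on $\Omega$. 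Since the Rayleigh quotient yields $\lambda_{1}[\Omega]=\inf\|g\mid L^{1,2}(\Omega)\|^{2}/\|g\mid L^{2}(\Omega)\|^{2}$ with the infimum attained by the first eigenfunction, we get $K=1/\sqrt{\lambda_{1}[\Omega]}$ exactly, and sharpness of $K^{*}$ on $\mathbb{D}$ follows by pulling back the extremal $g$.

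The main obstacle is solely the justification that $\varphi^{*}$ is a genuine isometric isomorphism of the $W_{0}^{1,2}$ spaces — in particular that it respects the zero boundary condition despite the possibly fractal boundary of $\Omega$. This is where I rely on the composition-operator theory of \cite{U1,VU1}: the density argument on $C_{0}^{\infty}$ disposes of the boundary values automatically (the operator never touches $\partial\Omega$), and the conformal invariance of the $p=2$ Dirichlet integral in the plane guarantees that the seminorm is preserved with constant exactly one, which is precisely what makes the transfer isometric rather than merely bounded and hence pins down the sharp constant.
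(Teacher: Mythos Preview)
Your proposal is correct and follows essentially the same approach as the paper: both arguments transfer the unweighted Poincar\'e inequality and the compact embedding $i_{\Omega}:W_{0}^{1,2}(\Omega)\hookrightarrow L^{2}(\Omega)$ to the weighted situation on $\mathbb{D}$ via the composition operators $\varphi^{*}$ and $(\varphi^{-1})^{*}$, using conformal invariance of the Dirichlet integral together with the change-of-variables identity $\|f\mid L^{2}(\mathbb{D},h)\|=\|f\circ\varphi^{-1}\mid L^{2}(\Omega)\|$, and then pass from $C_{0}^{\infty}$ to $W_{0}^{1,2}$ by density. Your presentation of the transfer as the factorization $i_{\mathbb{D}}=\varphi^{*}\circ i_{\Omega}\circ(\varphi^{*})^{-1}$ is slightly more streamlined than the paper's, which verifies separately that $(\varphi^{-1})^{*}:W_{0}^{1,2}(\mathbb{D})\to W_{0}^{1,2}(\Omega)$ and $\varphi^{*}:L^{2}(\Omega)\to L^{2}(\mathbb{D},h)$ are bounded, but the content is the same.
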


\begin{proof} Since $\varphi^{-1}:\Omega\to\mathbb{D}$ is a conformal
mapping, the composition operator
\[
(\varphi^{-1})^{*}:L^{1,2}(\mathbb{D})\to L^{1,2}(\Omega),\,\,\,(\varphi^{-1})^{*}(f)=f\circ\varphi^{-1},
\]
 is an isometry \cite{GU4}. Let $f\in C_{0}^{\infty}(\mathbb{D})$,
then $g=(\varphi^{-1})^{*}(f)=f\circ\varphi^{-1}\in C_{0}^{\infty}(\Omega)$. 
So, for the function $g\in C_{0}^{\infty}(\Omega)$ the Poincar\'e inequality
\begin{equation}
\label{PoIn}
\|g\mid L^{2}(\Omega)\|\leq K^{*}\|g\mid L^{1,2}(\Omega)\|
\end{equation}
holds with the exact constant $K^{*}=1/\sqrt{\lambda_1[\Omega]}$. Hence, using
the ``\,transfer\,'' diagram \cite{GGu} we
obtain
\begin{multline*}
\|f\mid L^{2}(\mathbb{D},h)\|=\biggl(\iint\limits _{\mathbb{D}}|f(z)|^{2}h(z)~dxdy\biggr)^{\frac{1}{2}}
=\biggl(\iint\limits _{\mathbb{D}}|f(z)|^{2}J(z,\varphi)(z)~dxdy\biggr)^{\frac{1}{2}}\\
=\biggl(\iint\limits _{\Omega}|f\circ\varphi^{-1}(w)|^{2}~dudw\biggr)^{\frac{1}{2}}
\leq K^{*}\biggl(\iint\limits _{\Omega}|\nabla f\circ\varphi^{-1})(w))|^{2}~dudw\biggr)^{\frac{1}{2}}\\
=K^{*}\biggl(\iint\limits _{\mathbb{D}}|\nabla f(z))|^{2}~dxdy\biggr)^{\frac{1}{2}}=K^{*}\|f\mid L^{1,2}(\mathbb{D})\|.
\end{multline*}
 Approximating an arbitrary function $f\in W_{0}^{1,2}(\mathbb{D})$
by functions in the space $C_{0}^{\infty}(\Omega)$ we obtain that the inequality
\[
\|f\mid L^{2}(\mathbb{D},h)\|\leq K^{*}\|f\mid L^{1,2}(\mathbb{D})\|
\]
 holds for any function $f\in W_{0}^{1,2}(\mathbb{D})$.

Now we prove that the composition operator
\[
(\varphi^{-1})^{*}:W_{0}^{1,2}(\mathbb{D})\to W_{0}^{1,2}(\Omega)
\]
is bounded.

Let a function $f\in C_0^{\infty}(\mathbb D)$. The  composition $(\varphi^{-1})^{\ast}(f)=f\circ\varphi^{-1}$ belongs to $C_0^{\infty}(\Omega)$.
So, using the Poincar\'e inequality (\ref{PoIn}) and the boundedness of the composition operator
\[
(\varphi^{-1})^{*}:L^{1,2}(\mathbb{D})\to L^{1,2}(\Omega)\,,
\]
we have
\begin{multline*}
\|(\varphi^{-1})^{\ast}(f)\mid L^2(\Omega)\|\leq K^{*} \|\nabla\left((\varphi^{-1})^{\ast}(f)\right)\mid L^{1,2}(\Omega)\|\\
=K^{*}\|\nabla f \mid L^{1,2}(\mathbb D)\|\leq K^{*}\|f\mid W_0^{1,2}(\mathbb D)\|\,.
\end{multline*}
Here $K^{*}$ is the norm of the embedding operator $i:L^{1,2}(\Omega) \to L^2(\Omega)$, i.e the exact constant in the corresponding Poincar\'e inequality (\ref{PoIn}).

Therefore
\begin{multline*}
\|(\varphi^{-1})^{\ast}(f)\mid W_0^{1,2}(\Omega)\|=\|(\varphi^{-1})^{\ast}(f)\mid L^{2}(\Omega)\|+\|(\varphi^{-1})^{\ast}(f)\mid L^{1,2}(\Omega)\|\\
\leq  K^{*}\|\nabla f \mid L^{1,2}(\mathbb D)\|+\|\nabla f \mid L^{1,2}(\mathbb D)\|\leq ( K^{*}+1)\| f \mid W_0^{1,2}(\mathbb D)\|.
\end{multline*}

Approximating an arbitrary function $f\in W_{0}^{1,2}(\mathbb{D})$ by functions  in the space $C_{0}^{\infty}(\Omega)$
we obtain that the inequality
\[
\|(\varphi^{-1})^{\ast}(f)\mid W_0^{1,2}(\Omega)\|\leq ( K^{*}+1)\| f \mid W_0^{1,2}(\mathbb D)\|
\]
holds for any function $f\in W_{0}^{1,2}(\mathbb{D})$.

On the other hand
\begin{multline*}
\|f\mid L^{2}(\mathbb{D},h)\|=\biggl(\iint\limits _{\mathbb{D}}|f(z)|^{2}h(z)~dxdy\biggr)^{\frac{1}{2}}=\biggl(\iint\limits _{\mathbb{D}}|f(z)|^{2}J_{\varphi}(z)~dxdy\biggr)^{\frac{1}{2}}\\
=\biggl(\iint\limits _{\Omega}|f\circ\varphi^{-1}(w)|^{2}~dudv\biggr)^{\frac{1}{2}}=\|f\mid L^{2}(\Omega)\|
\end{multline*}
and the composition operator 
$$
\varphi^{*}: L^2(\Omega)\to L^{2}(\mathbb{D},h)
$$
is bounded ($\varphi^{*}(f)=f\circ\varphi$).

Hence the embedding
operator (\ref{embedding})
is compact as it is the composition of the bounded composition operator $\varphi^{*}:L^{2}(\Omega)\to L^{2}(\mathbb{D}, h)$
and  the compact embedding operator $i_{\Omega}:W_{0}^{1,2}(\Omega)\hookrightarrow L^{2}(\Omega)$.
\end{proof}

By Theorem \ref{thm:BoundEm} it immediately follows that the
spectrum of  the weighted eigenvalue problem (\ref{WEn})  with hyperbolic (conformal) weights $h$
is discrete and can be written in the form of a non-decreasing sequence
\[
0<\lambda_{1}[h]\leq\lambda_{2}[h]\leq...\leq\lambda_{n}[h]\leq...\,,
\]
where each eigenvalue is repeated as many times as its multiplicity. The weighted eigenvalue problem in the unit disc $\mathbb D$
is equivalent to the eigenvalue problem in the domain $\Omega=\varphi(\mathbb D)$ (see also, for example \cite{LM1}) and
\begin{equation}\label{equality}
\lambda_{n}[h]=\lambda_{n}[\Omega], \,\,\, n\in\mathbb N\,.
\end{equation}
For weighted eigenvalues (eigenvalues in $\Omega$) we have also the following properties:
\[
(i) \lim\limits _{n\to\infty}\lambda_{n}[h]=\infty\,,
\]

(ii) for each $n\in\mathbb{N}$
\begin{equation}
\lambda_n[\Omega]=\lambda_{n}[h]=\inf\limits _{\substack{L\subset W_{0}^{1,2}(\Omega)\\
\dim L=n
}
}\sup\limits _{\substack{f\in L\\
f\ne0
}
}\frac{\iint\limits _{\Omega}|\nabla f|^{2}~dxdy}{\iint\limits _{\Omega}|f|^{2}~dxdy}=
\inf\limits _{\substack{L\subset W_{0}^{1,2}(\mathbb D,h,1)\\
\dim L=n
}
}\sup\limits _{\substack{f\in L\\
f\ne0
}
}\frac{\iint\limits _{\mathbb D}|\nabla f|^{2}~dxdy}{\iint\limits _{\mathbb D}|f|^{2}h(z)~dxdy}\label{MinMax}
\end{equation}
 (Min-Max Principle), and
\begin{equation}
\lambda_{n}[h]=\sup\limits _{\substack{f\in M_{n}\\
f\ne0
}
}\frac{\iint\limits _{\mathbb D}|\nabla f|^{2}~dxdy}{\iint\limits _{\mathbb D}|f|^{2}h(z)~dxdy}\label{MaxPr}
\end{equation}
 where
\[
M_{n}={\rm span}\,\{\psi_{1}[h],...\psi_{n}[h]\}
\]
 and $\{\psi_{k}[h]\}_{k=1}^{\infty}$ is an orthonormal set of eigenfunctions
corresponding to the eigenvalues $\{\lambda_{k}[h]\}_{k=1}^{\infty}$.

(ii) If $n=1$, then formula (\ref{MinMax}) reduces to
\[
\lambda_{1}[\Omega]=\lambda_{1}[h]=\inf\limits _{\substack{f\in W_{0}^{1,2}(\Omega)\\
f\ne0
}
}\frac{\iint\limits _{\Omega}|\nabla f|^{2}~dxdy}{\iint\limits _{\Omega}|f|^{2}~dxdy}
=
\inf\limits _{\substack{f\in W_{0}^{1,2}(\mathbb D,h,1)\\
f\ne0
}
}\frac{\iint\limits _{\mathbb D}|\nabla f|^{2}~dxdy}{\iint\limits _{\mathbb D}|f|^{2}h(z)~dxdy}.
\]
 In other words
\begin{equation}
{\lambda_{1}[\Omega]}={\lambda_{1}[h]}=\frac{1}{\left(K^{\ast}\right)^2}\label{IneqC}
\end{equation}
 where $K^{\ast}$ is the sharp constant in the inequality
\begin{equation}
\left(\iint\limits _{\mathbb D}|f|^{2}h(z)~dxdy\right)^{\frac{1}{2}}
\leq K^{\ast}\left(\iint\limits _{\mathbb D}|\nabla f|^{2}~dxdy\right)^{\frac{1}{2}}\,,\,\,\,~~\forall f\in W_{0}^{1,2}(\Omega).\label{PoinIneq}
\end{equation}

\section{The $L^{1,2}$-seminorm estimates}

We consider two weighted eigenvalue problems in the unit disc $\mathbb D\subset\mathbb C$:
\[
\iint\limits _{\mathbb D}(\nabla f(z)\cdot\nabla\overline{{g(z)}})~dxdy=\lambda\iint\limits _{\mathbb D}h_{1}(z)f(z)\overline{g(z)}~dxdy\,,\,\,~~\forall g\in W_{0}^{1,2}(\mathbb D).
\]
 and
\[
\iint\limits _{\mathbb D}(\nabla f(z)\cdot\nabla\overline{{g(z)}})~dxdy=\lambda\iint\limits _{\mathbb D}h_{2}(z)f(z)\overline{{g(z)}}~dxdy\,,\,\,~~\forall g\in W_{0}^{1,2}(\mathbb D).
\]

The aim of this section is to estimate the ``\,distance\,''
between weighted eigenvalues $\lambda_{n}[h_{1}]$ and $\lambda_{n}[h_{2}]$.
\begin{lem}
 \label{lem:TwoWeight} Let $\mathbb D\subset\mathbb{C}$ be the unit disc
and let $h_{1}$, $h_{2}$ be conformal weights on $\mathbb D$. Suppose that there exists a constant $B>0$ such that
\begin{equation}
\iint\limits _{\mathbb D}|h_{1}(z)-h_{2}(z)||f|^{2}~dxdy\leq B\iint\limits _{\mathbb D}|\nabla f|^{2}~dxdy,\,\,
\forall f\in W_{0}^{1,2}(\mathbb D).\label{EqvWW}
\end{equation}

 Then for any $n\in\mathbb{N}$
\begin{equation}
|\lambda_{n}[h_{1}]-\lambda_{n}[h_{2}]|\leq \frac{B\tilde c_n}{1+B\sqrt{\tilde c_n}}< B\tilde c_n\,,\label{LemEq}
\end{equation}
where
\begin{equation}\label{tilde c_n}
\tilde c_n=\max\{\lambda_{n}^{2}[h_{1}],\lambda_{n}^{2}[h_{2}]\} \,.
\end{equation}
\end{lem}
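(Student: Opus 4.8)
The plan is to run a direct Min--Max comparison of the two Rayleigh quotients. Write $D[f]=\iint_{\mathbb D}|\nabla f|^2\,dxdy$ and $Q_i[f]=\iint_{\mathbb D}|f|^2 h_i\,dxdy$ for $i=1,2$, so that by \eqref{MinMax} we have $\lambda_n[h_i]=\inf_{\dim L=n}\sup_{0\ne f\in L} D[f]/Q_i[f]$, where $L$ ranges over $n$-dimensional subspaces. First I would record that the two inf--sup's may be taken over one and the same family of trial subspaces: every admissible test function lies in $W_0^{1,2}(\mathbb D)\subset W_0^{1,2}(\mathbb D,h_i,1)$ by Theorem \ref{thm:BoundEm}, so the common form domain $W_0^{1,2}(\mathbb D)$ supplies the subspaces $L$ for both weighted problems simultaneously.

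The key per-function estimate comes straight from the hypothesis. Since $Q_1[f]-Q_2[f]=\iint_{\mathbb D}(h_1-h_2)|f|^2\,dxdy$, inequality \eqref{EqvWW} gives $|Q_1[f]-Q_2[f]|\le B\,D[f]$ for every $f\in W_0^{1,2}(\mathbb D)$. In particular $Q_2[f]\le Q_1[f]+B\,D[f]$, and passing to reciprocals yields the pointwise comparison of Rayleigh quotients
\[
\frac{D[f]}{Q_2[f]}\ \ge\ \frac{D[f]}{Q_1[f]+B\,D[f]}\ =\ g\!\left(\frac{D[f]}{Q_1[f]}\right),\qquad g(t):=\frac{t}{1+Bt}.
\]
This lower bound holds unconditionally, with no smallness assumption, because $Q_1[f]+B\,D[f]>0$ always; by symmetry the analogue with $h_1,h_2$ interchanged also holds.

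Next I would exploit that $g$ is continuous and strictly increasing on $[0,\infty)$ (indeed $g'(t)=(1+Bt)^{-2}>0$), so it commutes with both the inner $\sup$ and the outer $\inf$. Applying this to the Min--Max representation gives
\[
\lambda_n[h_2]=\inf_L\sup_{f\in L}\frac{D[f]}{Q_2[f]}\ \ge\ \inf_L\sup_{f\in L} g\!\left(\frac{D[f]}{Q_1[f]}\right)=\inf_L g\!\left(\sup_{f\in L}\frac{D[f]}{Q_1[f]}\right)=g\bigl(\lambda_n[h_1]\bigr),
\]
and symmetrically $\lambda_n[h_1]\ge g(\lambda_n[h_2])$. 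Since $t-g(t)=\frac{Bt^2}{1+Bt}$, the first inequality yields $\lambda_n[h_1]-\lambda_n[h_2]\le \frac{B\lambda_n^2[h_1]}{1+B\lambda_n[h_1]}$, and the second the same bound with $h_2$ in place of $h_1$.

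To finish, assume without loss of generality $\lambda_n[h_1]\ge\lambda_n[h_2]$, so that $\tilde c_n=\lambda_n^2[h_1]$ and $\sqrt{\tilde c_n}=\lambda_n[h_1]$; the relevant one-sided bound then reads exactly $|\lambda_n[h_1]-\lambda_n[h_2]|\le \frac{B\tilde c_n}{1+B\sqrt{\tilde c_n}}$, and the strict inequality $<B\tilde c_n$ is immediate since the denominator exceeds $1$. The only genuinely delicate points are the justification that the two Min--Max problems are posed over the same family of trial subspaces and the commutation of the monotone $g$ through the outer $\inf$ and inner $\sup$; both become routine once one records that $g$ is continuous and increasing and that the per-function comparison is uniform in $f$.
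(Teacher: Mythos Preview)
Your argument is correct and rests on the same idea as the paper's: compare the two Rayleigh quotients pointwise via $Q_2[f]\le Q_1[f]+B\,D[f]$ and pass through the increasing function $g(t)=t/(1+Bt)$. The organization differs slightly. The paper starts from \eqref{MaxPr}, writing $\lambda_n[h_1]$ as a supremum over the \emph{specific} $n$-dimensional subspace $M_n^{(1)}$ spanned by the first $n$ eigenfunctions for $h_1$, bounds that supremum below by $g\bigl(\sup_{M_n^{(1)}}D/Q_2\bigr)$, and then invokes \eqref{MinMax} only once, for $h_2$, to replace $\sup_{M_n^{(1)}}D/Q_2$ by $\lambda_n[h_2]$. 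You instead work with the full Min--Max characterization \eqref{MinMax} for both weights simultaneously and commute the monotone $g$ through both the inner $\sup$ and the outer $\inf$. Your route is marginally cleaner in that it never appeals to the eigenfunctions themselves, only to the variational formula; the paper's route makes the role of the test subspace $M_n^{(1)}$ more explicit but needs the additional product--infimum step. Both yield exactly the same one-sided bound $\lambda_n[h_i]\ge g(\lambda_n[h_j])$, and your justification of the common form domain via Theorem~\ref{thm:BoundEm} and of the commutation (finite suprema on finite-dimensional subspaces, continuity and monotonicity of $g$) is sound.
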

\begin{proof} By (\ref{MaxPr})
\[
\lambda_{n}[h_{1}]=\sup\limits _{\substack{f\in M_{n}^{(1)}\\
f\ne0
}
}\frac{\iint\limits _{\mathbb D}|\nabla f|^{2}~dxdy}{\iint\limits _{\mathbb D}h_{1}(x)|f|^{2}~dxdy}\,,
\]
 where
\[
M_{n}^{(1)}={\rm span}\,\{\psi_{1}[h_{1}],...\psi_{n}[h_{1}]\}.
\]
 Hence, by (\ref{EqvWW}),
\[
\begin{split}
\lambda_{n}[h_{1}] & \geq\sup\limits _{\substack{f\in M_{n}^{(1)}\\
f\ne0
}
}\frac{\iint\limits _{\mathbb D}|\nabla f|^{2}~dxdy}{\iint\limits _{\mathbb D}h_{2}(z)|f|^{2}~dxdy+\iint\limits _{\mathbb D}|h_{1}(z)-h_{2}(z)||f|^{2}~dxdy} \\
 & \geq\sup\limits _{\substack{f\in M_{n}^{(1)}\\
f\ne0
}
}\frac{\iint\limits _{\mathbb D}|\nabla f|^{2}~dxdy}{\iint\limits _{\mathbb D}h_{2}(z)|f|^{2}~dxdy+B\iint\limits _{\mathbb D}|\nabla f|^{2}~dxdy}\\
& =\sup\limits _{\substack{f\in M_{n}^{(1)}\\
f\ne0
}
}\frac{\iint\limits _{\mathbb D}|\nabla f|^{2}~dxdy}{\iint\limits _{\mathbb D}h_{2}(z)|f|^{2}~dxdy}\cdot\frac{1}{1+B\frac{\iint\limits _{\mathbb D}|\nabla f|^{2}~dxdy}{\iint\limits _{\mathbb D}h_{2}(z)|f|^{2}~dxdy}}\\
& \geq\sup\limits _{\substack{f\in M_{n}^{(1)}\\
f\ne0
}
}\frac{\iint\limits _{\mathbb D}|\nabla f|^{2}~dxdy}{\iint\limits _{\mathbb D}h_{2}(z)|f|^{2}~dxdy}\cdot\inf\limits _{\substack{f\in M_{n}^{(1)}\\
f\ne0
}
}\frac{1}{1+B\frac{\iint\limits _{\mathbb D}|\nabla f|^{2}~dxdy}{\iint\limits _{\mathbb D}h_{2}(z)|f|^{2}~dxdy}}\\
& =\sup\limits _{\substack{f\in M_{n}^{(1)}\\
f\ne0
}
}\frac{\iint\limits _{\mathbb D}|\nabla f|^{2}~dxdy}{\iint\limits _{\mathbb D}h_{2}(z)|f|^{2}~dxdy}\cdot\frac{1}{1+B\sup\limits _{\substack{f\in M_{n}^{(1)}\\
f\ne0}}\frac{\iint\limits _{\mathbb D}|\nabla f|^{2}~dxdy}{\iint\limits _{\mathbb D}h_{2}(z)|f|^{2}~dxdy}}.
\end{split}
\]
 Since the function $F(t)={t}/{(1+Bt)}$ is non-decreasing on $[0,\infty)$
and by (\ref{MinMax})
\[
\sup\limits _{\substack{f\in M_{n}^{(1)}\\
f\ne0
}
}\frac{\iint\limits _{\mathbb D}|\nabla f|^{2}~dxdy}{\iint\limits _{\mathbb D}h_{2}(z)|f|^{2}~dxdy}\geq\lambda_{n}[h_{2}],
\]
 it follows that
\[
\lambda_{n}[h_{1}]\geq\frac{\lambda_{n}[h_{2}]}{1+B\lambda_{n}[h_{2}]}=\lambda_{n}[h_{2}]- \frac{B\lambda_{n}^2[h_{2}]}{1+B\lambda_{n}[h_{2}]}.
\]
 Hence
\begin{equation}
\lambda_{n}[h_{1}]-\lambda_{n}[h_{2}]\geq-\frac{B\lambda_{n}^2[h_{2}]}{1+B\lambda_{n}[h_{2}]}\geq-\frac{B\tilde c_n}{1+B\sqrt{\tilde c_n}}.\label{LemEqR}
\end{equation}
 For similar reasons
\[
\lambda_{n}[h_{2}]-\lambda_{n}[h_{1}]\geq -\frac{B\lambda^2_{n}[h_{1}]}{1+B\lambda_{n}[h_{1}]}
\]
 or
\begin{equation}
\lambda_{n}[h_{1}]-\lambda_{n}[h_{2}]\leq \frac{B\lambda^2_{n}[h_{1}]}{1+B\lambda_{n}[h_{1}]}\le \frac{B\tilde c_n}{1+B\sqrt{\tilde c_n}}.\label{LemEqL}
\end{equation}

Inequalities (\ref{LemEqR}) and (\ref{LemEqL}) imply inequality
(\ref{LemEq}). \end{proof}

\vskip 0.2cm \textbf{Remark.} By equality (\ref{IneqC})  the minimal value of $B$ in  inequality (\ref{EqvWW})
is equal to
\[
\frac{1}{\lambda_{1}[|h_{1}-h_{2}|]}\,.
\]
 Hence  inequality (\ref{LemEq}) implies that
\begin{equation*}
|\lambda_{n}[h_{1}]-\lambda_{n}[h_{2}]|\leq\frac{\max\{\lambda_{n}^{2}[h_{1}],\lambda_{n}^{2}[h_{2}]\}}{\lambda_{1}[|h_{1}-h_{2}|]}\,.\label{RemEqL}
\end{equation*}

Now we estimate the constant $B$ in Lemma \ref{lem:TwoWeight} in terms of ``\,distances\,'' between weights.

Recall that for any $2\le q<\infty$ the Sobolev inequality
\begin{equation}\label{Sobolev}
\|f\mid L^{q}(\mathbb{D})\|\leq C(q)\|\nabla f\mid L^{2}(\mathbb{D})\|
\end{equation}
 holds for any function $f\in W_{0}^{1,2}(\mathbb{D})$. We assume that  $C(q)$
is the best possible constant in this inequality.

\begin{lem}
\label{lem:TwoWeiPol} Let   $h_{1}$, $h_{2}$ be conformal weights on
$\mathbb{D}$ such that
\begin{equation}
d_{s}(h_{1},h_{2}):= \|h_1-h_2\mid L^{s}(\mathbb{D})\|<\infty  \label{EqvWWpol}
\end{equation}
 for some $1<s\le\infty$.

Then  inequality $(\ref{EqvWW})$ holds with the constant
\begin{equation}
B=\Big[C\Big(\frac{2s}{s-1}\Big)\Big]^2\,d_{s}(h_{1},h_{2})\,.\label{Lem2Es}
\end{equation}
\end{lem}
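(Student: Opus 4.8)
The plan is to bound the left-hand side of \eqref{EqvWW} by first applying Hölder's inequality to peel the weight difference off the test function, and then invoking the Sobolev inequality \eqref{Sobolev} to convert the resulting $L^{q}$-norm of $f$ into the Dirichlet integral $\iint_{\mathbb D}|\nabla f|^{2}$. Since the conformal weights $h_{1},h_{2}$ are non-negative measurable functions, $|h_{1}-h_{2}|$ is a legitimate weight, and the hypothesis \eqref{EqvWWpol} guarantees that $|h_{1}-h_{2}|\in L^{s}(\mathbb D)$, which is exactly what is needed to make the Hölder factor finite.

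First I would fix $f\in W_{0}^{1,2}(\mathbb D)$ and, assuming $1<s<\infty$, apply Hölder's inequality with the conjugate exponents $s$ and $s'=s/(s-1)$ to the integrand $|h_{1}-h_{2}|\,|f|^{2}$:
\[
\iint_{\mathbb D}|h_{1}-h_{2}|\,|f|^{2}\,dxdy
\le\Big(\iint_{\mathbb D}|h_{1}-h_{2}|^{s}\,dxdy\Big)^{\frac1s}
\Big(\iint_{\mathbb D}|f|^{\frac{2s}{s-1}}\,dxdy\Big)^{\frac{s-1}{s}}.
\]
The first factor equals $d_{s}(h_{1},h_{2})$ by \eqref{EqvWWpol}, while the second factor is precisely $\|f\mid L^{q}(\mathbb D)\|^{2}$ with $q=\tfrac{2s}{s-1}=2s'$. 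Next I would check that this exponent is admissible for \eqref{Sobolev}: for every $s\in(1,\infty)$ one has $2<q=\tfrac{2s}{s-1}<\infty$, so \eqref{Sobolev} applies and gives $\|f\mid L^{q}(\mathbb D)\|^{2}\le[C(q)]^{2}\iint_{\mathbb D}|\nabla f|^{2}\,dxdy$. Combining the two displays yields \eqref{EqvWW} with $B=[C(\tfrac{2s}{s-1})]^{2}\,d_{s}(h_{1},h_{2})$, which is \eqref{Lem2Es}. The limiting case $s=\infty$ I would treat directly: here the Hölder step degenerates to $\iint_{\mathbb D}|h_{1}-h_{2}|\,|f|^{2}\le\|h_{1}-h_{2}\mid L^{\infty}(\mathbb D)\|\iint_{\mathbb D}|f|^{2}$, and \eqref{Sobolev} with $q=2=\lim_{s\to\infty}\tfrac{2s}{s-1}$ closes the argument, in agreement with \eqref{Lem2Es}.

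There is no substantial obstacle in this lemma; the argument is a clean pairing of Hölder's inequality with the Sobolev embedding, and the statement has essentially been designed so that these two tools compose. The only points that require care are bookkeeping ones: identifying the correct Sobolev exponent $q=\tfrac{2s}{s-1}$ produced by the splitting (so that $|f|^{2}$ in $L^{s'}$ becomes $\|f\|_{L^{q}}^{2}$ with $q=2s'$), confirming that $q$ lies in the range $[2,\infty)$ on which the sharp constant $C(q)$ in \eqref{Sobolev} is defined, and separating out the endpoint $s=\infty$. I would expect the write-up to be only a few lines.
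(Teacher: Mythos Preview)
Your argument is correct and matches the paper's proof essentially line for line: H\"older with exponents $s$ and $s'=s/(s-1)$, followed by the Sobolev inequality \eqref{Sobolev} with $q=\tfrac{2s}{s-1}$. Your explicit check that $q\in[2,\infty)$ and your separate handling of the endpoint $s=\infty$ are slightly more careful than the paper's version, but the content is the same.
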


\begin{proof}
By the H\"older inequality and Sobolev inequality (\ref{Sobolev}) we get
\[
\iint\limits _{\mathbb{D}}|h_{1}(z)-h_{2}(z)||f|^{2}~dxdy
\]
\[
\leq\left(\iint\limits _{\mathbb{D}}\left(|h_{1}(z)-h_{2}(z)|\right)^{s}dxdy\right)^{\frac{1}{s}}\left(\iint\limits _{\mathbb{D}}|f(z)|^{\frac{2s}{s-1}}dxdy\right)^{\frac{s-1}s}
\]
\[
\leq \Big[C\Big(\frac{2s}{s-1}\Big)\Big]^2\,d_{s}(h_{1},h_{2}) \iint\limits _{\mathbb{D}}|\nabla f(z)|^{2}dxdy\,.
\]
\end{proof}

By the two previous lemmas we get immediately the main result for the difference of weighted eigenvalues:
\begin{thm}
\label{thm:TwoWW} Let  $h_{1}$, $h_{2}$ conformal weights on
$\mathbb{D}$. Suppose that $d_{s}(h_{1},h_{2})<\infty$ for some $s>1$.

Then, for every $n\in\mathbb{N}$,
\[
|\lambda_{n}[h_{1}]-\lambda_{n}[h_{2}]|\leq \tilde c_{n}\Big[C\Big(\frac{2s}{s-1}\Big)\Big]^2 d_{s}(h_{1},h_{2})\,.
\]
\end{thm}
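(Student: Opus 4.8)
The plan is to obtain the theorem as a direct composition of the two preceding lemmas, since together they already contain all of the analytic content. Lemma \ref{lem:TwoWeiPol} converts the summability hypothesis $d_{s}(h_{1},h_{2})<\infty$ into the concrete comparison inequality (\ref{EqvWW}), while Lemma \ref{lem:TwoWeight} converts that comparison inequality into the desired eigenvalue bound. The only genuine decision is which of the two upper bounds furnished by (\ref{LemEq}) to carry through to the final statement.

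First I would verify that the hypothesis of the theorem is exactly what is needed to invoke Lemma \ref{lem:TwoWeiPol}: the assumption $d_{s}(h_{1},h_{2})=\|h_{1}-h_{2}\mid L^{s}(\mathbb D)\|<\infty$ for some $s>1$ is precisely condition (\ref{EqvWWpol}). Applying that lemma then produces a finite constant
\[
B=\Big[C\Big(\frac{2s}{s-1}\Big)\Big]^{2}\,d_{s}(h_{1},h_{2})
\]
for which the comparison inequality (\ref{EqvWW}) holds for every $f\in W_{0}^{1,2}(\mathbb D)$. Here one should note that the Sobolev exponent $q=\frac{2s}{s-1}$ satisfies $q\ge 2$ for all $s\in(1,\infty]$ (with $q\to 2$ as $s\to\infty$ and $q\to\infty$ as $s\to 1^{+}$), so that the best constant $C(q)$ in (\ref{Sobolev}) is indeed defined and finite; this is what guarantees $B<\infty$.

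Next I would feed this value of $B$ into Lemma \ref{lem:TwoWeight}. Since $h_{1},h_{2}$ are conformal weights on $\mathbb D$, Theorem \ref{thm:BoundEm} guarantees the compactness of the associated embeddings and hence the existence of the discrete spectra $\{\lambda_{n}[h_{1}]\}$, $\{\lambda_{n}[h_{2}]\}$ entering the statement, so the hypotheses of Lemma \ref{lem:TwoWeight} are met. That lemma yields
\[
|\lambda_{n}[h_{1}]-\lambda_{n}[h_{2}]|\le\frac{B\tilde c_{n}}{1+B\sqrt{\tilde c_{n}}}<B\tilde c_{n},
\]
with $\tilde c_{n}$ as in (\ref{tilde c_n}). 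Discarding the sharper middle term in favour of the clean upper bound $B\tilde c_{n}$ and substituting the expression for $B$ gives exactly
\[
|\lambda_{n}[h_{1}]-\lambda_{n}[h_{2}]|\le\tilde c_{n}\Big[C\Big(\frac{2s}{s-1}\Big)\Big]^{2}d_{s}(h_{1},h_{2}),
\]
which is the claim.

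Since all of the real estimation has already been done in the lemmas, there is no serious obstacle at this stage; the argument is essentially pure assembly. If anything, the only point requiring a moment's care is the bookkeeping of the Sobolev exponent and the verification that replacing the monotone expression $B\tilde c_{n}/(1+B\sqrt{\tilde c_{n}})$ by its simple majorant $B\tilde c_{n}$ is legitimate, which it is because $B,\tilde c_{n}\ge 0$ force the denominator to be at least $1$.
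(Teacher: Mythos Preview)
Your proposal is correct and follows exactly the route the paper itself indicates: the theorem is obtained immediately by combining Lemma~\ref{lem:TwoWeiPol} (to produce the constant $B$) with Lemma~\ref{lem:TwoWeight} (to convert $B$ into the eigenvalue estimate), keeping only the cleaner upper bound $B\tilde c_n$. The paper offers no separate proof beyond the phrase ``by the two previous lemmas we get immediately,'' so your write-up in fact supplies more detail than the original.
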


\section{On ``distances" $d_{s}(h_{1},h_{2})$ for hyperbolic (conformal) weights $h_{1},h_{2}$}

Let us analyze ``\,distances\,'' $d_{s}(h_{1},h_{2})$ for hyperbolic (conformal) weights.

Recall that hyperbolic (conformal) weights $h_{1}(z),h_{2}(z)$ for bounded simply connected plane domains are Jacobians 
$J_{\varphi_{1}}(z)$, $J_{\varphi_{2}}(z)$ of conformal homeomorphisms
$$\varphi_{1}:\mathbb{D}\to\Omega_{1},\,\,\, \varphi_{1}:\mathbb{D}\to\Omega_{2}.
$$

Since $\Omega_{1},\Omega_{2}$ are
bounded domains the Jacobians $J_{\varphi_{1}}(z)$, $J_{\varphi_{2}}(z)$ are integrable,
i.~e. $\varphi_{1}',\varphi_{2}'\in L^{2}(\mathbb D)$. An example of the
unit disc without the interval $(0,1)$ on the horizontal axis demonstrates
that for general simply connected domains $\Omega$ the Jacobians of conformal homeomorphisms
$\varphi:\mathbb{D}\to\Omega$ need not be a power greater
than $1$. Hence the integrability of  Jacobians to the power $s>1$
is possible only under additional assumptions on $\Omega$.

 In \cite{GU5} it is proved that such integrability  is possible only for domains with finite
geodesic diameter. Hence  $d_{1}(h_{1},h_{2})<\infty$ but, for $s>1$, the quantity $d_{s}(h_{1},h_{2})$ is not defined
for all pairs of conformal weights $h_{1},h_{2}$.

\begin{lem}\label{Lemma 1}
\label{thm:MesureEstimate}Let $\varphi_{1}:\mathbb{D}\to\Omega_{1}$, $\varphi_{2}:\mathbb{D}\to\Omega_{2}$ be conformal homeomorphisms and
$h_{1},h_{2}$ be the corresponding conformal weights. Suppose that $\left|\varphi_{1}'\right|,\left|\varphi_{2}'\right|\in L^{p}(\mathbb{D})$ for some $2<p\le\infty$.

Then for $s=\frac{2p}{p+2}$
\begin{equation} \label{measure}
d_{s}(h_{1},h_{2})\leq \left(\|\varphi_1'\mid L^{p}(\mathbb D)\|+
\|\varphi_2'\mid L^{p}(\mathbb D)\|\right)
\cdot\||\varphi_1'|-|\varphi_2'|\mid L^{2}(\mathbb D)\|.
\end{equation}
\end{lem}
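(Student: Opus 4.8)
The plan is to exploit the fact that each conformal weight is the square of the modulus of a derivative, so that the difference $h_1-h_2$ factors as a difference of squares; one factor will then be controlled in $L^2(\mathbb{D})$ and the other in $L^p(\mathbb{D})$, and a single application of H\"older's inequality with the conjugate pair $(2,p)$ will produce exactly the exponent $s=\frac{2p}{p+2}$.

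First I would record the factorization, which is where conformality enters through the identity $h_i(z)=|\varphi_i'(z)|^2=J_{\varphi_i}(z)$ from $(\ref{h})$. Pointwise we then have
\[
h_1(z)-h_2(z)=|\varphi_1'(z)|^2-|\varphi_2'(z)|^2=\bigl(|\varphi_1'(z)|-|\varphi_2'(z)|\bigr)\bigl(|\varphi_1'(z)|+|\varphi_2'(z)|\bigr),
\]
so that $|h_1(z)-h_2(z)|=\bigl||\varphi_1'(z)|-|\varphi_2'(z)|\bigr|\cdot\bigl(|\varphi_1'(z)|+|\varphi_2'(z)|\bigr)$.

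Next I would apply H\"older's inequality to $\|h_1-h_2\mid L^s(\mathbb{D})\|$ with the exponents $2$ and $p$, placing the factor $|\varphi_1'|-|\varphi_2'|$ in $L^2(\mathbb{D})$ and the factor $|\varphi_1'|+|\varphi_2'|$ in $L^p(\mathbb{D})$. The conjugacy relation $\frac1s=\frac12+\frac1p$ is equivalent to $s=\frac{2p}{p+2}$, which is precisely the prescribed value (degenerating to $s=2$ with the second factor in $L^\infty$ when $p=\infty$). This gives
\[
d_s(h_1,h_2)\leq\bigl\||\varphi_1'|-|\varphi_2'|\mid L^2(\mathbb{D})\bigr\|\cdot\bigl\||\varphi_1'|+|\varphi_2'|\mid L^p(\mathbb{D})\bigr\|.
\]
Finally I would bound the last factor by the triangle inequality in $L^p(\mathbb{D})$, namely $\||\varphi_1'|+|\varphi_2'|\mid L^p(\mathbb{D})\|\leq\|\varphi_1'\mid L^p(\mathbb{D})\|+\|\varphi_2'\mid L^p(\mathbb{D})\|$, which is finite by the standing hypothesis $|\varphi_1'|,|\varphi_2'|\in L^p(\mathbb{D})$; combining the two displays yields $(\ref{measure})$. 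There is no serious obstacle here: the only points needing care are the bookkeeping of exponents, i.e. checking that $(2,p)$ is the correct H\"older pair for the target $s$, and the endpoint case $p=\infty$, both of which are immediate.
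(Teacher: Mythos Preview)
Your proof is correct and follows essentially the same route as the paper: factor $h_1-h_2=(|\varphi_1'|-|\varphi_2'|)(|\varphi_1'|+|\varphi_2'|)$, apply H\"older with the pair $(2,p)$ to land in $L^s$ with $\frac{1}{s}=\frac{1}{2}+\frac{1}{p}$, and finish with the triangle inequality in $L^p$. The only cosmetic difference is that the paper first raises to the $s$-th power and then uses the standard H\"older pair $(r,r')=\bigl(\tfrac{2}{s},\tfrac{2}{2-s}\bigr)$, which is the same computation in disguise since $\frac{2s}{2-s}=p$ for $s=\frac{2p}{p+2}$.
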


\begin{proof}
By the definitions of $h_1,h_2$ and $d_{s}(h_{1},h_{2})$
\begin{multline}
\left[d_{s}(h_{1},h_{2})\right]^{s}=\iint\limits _{\mathbb{D}}\left|h_{1}(z)-h_{2}(z)\right|^{s}dxdy
=\iint\limits _{\mathbb{D}}\left||\varphi'_{1}(z)|^{2}-|\varphi'_{2}(z)|^{2}\right|^{s}dxdy\\
=
\iint\limits_{\mathbb{D}}\left||\varphi'_{1}(z)|+|\varphi'_{2}(z)|\right|^{s}\left||\varphi_{1}'(z)|-|\varphi_{2}'(z)|\right|^{s}dxdy.
\nonumber
\end{multline}
Applying to  the last integral the H\"older inequality  with $r=\frac{2}{s}$
($1\le r<2$ because $1<s\le 2$) and $r'=\frac{r}{r-1}=\frac{2}{2-s}$
we obtain
\begin{multline}
\left[d_{s}(h_{1},h_{2})\right]^{s}\\
\leq\left(\iint\limits _{\mathbb{D}}\left||\varphi_{1}'(z)|+|\varphi_{2}'(z)|\right|^{\frac{2s}{2-s}}dxdy\right)^{\frac{2-s}{2}}
\left(\iint\limits _{\mathbb{D}}\left(|\varphi_{1}'(z)|-|\varphi_{2}'(z)|\right)^{2}dxdy\right)^{\frac{s}{2}}.
\nonumber
\end{multline}
Since $s=\frac{2p}{p+2}$ we have
\[
d_{s}(h_{1},h_{2})\leq \||\varphi_1'|+|\varphi_2'|\mid L^{p}(\mathbb D)\|\cdot\||\varphi_1'|-|\varphi_2'|\mid L^{2}(\mathbb D)\|\,.
\]
\end{proof}

Note that integral estimate (\ref{measure}) can be rewritten in terms of the measure variation.

\begin{lem}\label{Lemma 2}
Let $\varphi_{1}:\mathbb{D}\to\Omega_{1}$, $\varphi_{2}:\mathbb{D}\to\Omega_{2}$ be conformal homeomorphisms. Then
\begin{multline}
\||\varphi_1'|-|\varphi_2'|\mid L^{2}(\mathbb D)\|\\
\le
\Big(\left[{\rm meas}\,(\varphi_1(\mathbb D^+))-{\rm meas}\,(\varphi_2(\mathbb D^+))\right]+
\left[{\rm meas}\,(\varphi_2(\mathbb D^-))-{\rm meas}\,(\varphi_1(\mathbb D^-))\right]\Big)^{\frac12}\,,
\nonumber
\end{multline}
where the sets $\mathbb D^+$ and $\mathbb D^-$ are defined by equalities $(\ref{D+-})$.
\end{lem}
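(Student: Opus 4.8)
The plan is to square the $L^2$-seminorm on the left-hand side, bound the resulting integrand pointwise by $|J_{\varphi_1}-J_{\varphi_2}|$, split the integral according to the sign of $J_{\varphi_1}-J_{\varphi_2}$, and then recognize each piece as the measure of an image via the change of variables formula. First I would record the elementary pointwise inequality: for any two nonnegative reals $a,b$ one has $(a-b)^2\le|a^2-b^2|$, since $|a^2-b^2|=|a-b|(a+b)$ and $|a-b|\le a+b$. Applying this with $a=|\varphi_1'(z)|$, $b=|\varphi_2'(z)|$, and recalling that $J_{\varphi_i}(z)=|\varphi_i'(z)|^2$, I obtain
\[
\||\varphi_1'|-|\varphi_2'|\mid L^{2}(\mathbb D)\|^2
=\iint\limits_{\mathbb D}\left(|\varphi_1'(z)|-|\varphi_2'(z)|\right)^2 dxdy
\le\iint\limits_{\mathbb D}\left|J_{\varphi_1}(z)-J_{\varphi_2}(z)\right|dxdy.
\]

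Next I would split the domain of integration using the partition $\mathbb D=\mathbb D^+\cup\mathbb D^-$ given by $(\ref{D+-})$. By construction $J_{\varphi_1}-J_{\varphi_2}\ge0$ on $\mathbb D^+$ and $J_{\varphi_1}-J_{\varphi_2}<0$ on $\mathbb D^-$, so the absolute value can be removed on each piece with the appropriate sign:
\[
\iint\limits_{\mathbb D}\left|J_{\varphi_1}-J_{\varphi_2}\right|dxdy
=\iint\limits_{\mathbb D^+}\left(J_{\varphi_1}-J_{\varphi_2}\right)dxdy
+\iint\limits_{\mathbb D^-}\left(J_{\varphi_2}-J_{\varphi_1}\right)dxdy.
\]

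Finally, since $\varphi_1$ and $\varphi_2$ are conformal homeomorphisms, the change of variables formula gives $\iint_{E}J_{\varphi_i}\,dxdy={\rm meas}\,(\varphi_i(E))$ for every measurable $E\subset\mathbb D$. Applying this to $E=\mathbb D^+$ and $E=\mathbb D^-$ converts the last expression into
\[
\left[{\rm meas}\,(\varphi_1(\mathbb D^+))-{\rm meas}\,(\varphi_2(\mathbb D^+))\right]
+\left[{\rm meas}\,(\varphi_2(\mathbb D^-))-{\rm meas}\,(\varphi_1(\mathbb D^-))\right],
\]
and taking square roots yields the claimed inequality. The only point requiring a word of justification is the change of variables on the sets $\mathbb D^\pm$, which needs injectivity of the maps on these sets; this is immediate because $\varphi_1,\varphi_2$ are homeomorphisms of $\mathbb D$ onto $\Omega_1,\Omega_2$. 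Consequently I expect no genuine obstacle: once the pointwise estimate $(a-b)^2\le|a^2-b^2|$ is in hand, the remainder is an exact computation.
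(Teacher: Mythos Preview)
Your proposal is correct and follows essentially the same approach as the paper: the pointwise inequality $(a-b)^2\le|a^2-b^2|$, the identification $|\varphi_i'|^2=J_{\varphi_i}$, the split of $\mathbb D$ into $\mathbb D^+$ and $\mathbb D^-$, and the change of variables to recover the measures of the images. If anything, you supply slightly more justification than the paper (the reason for the elementary inequality and the remark on injectivity), but the argument is the same.
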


\begin{proof} By  using the elementary inequality $(a-b)^{2}\leq |a^{2}-b^{2}|$ for any
$a,b\ge0$ and the equality $|\varphi_{1}'(z)|^2=J_{\varphi}$ for conformal homeomorphisms we get
\begin{multline}
\iint\limits _{\mathbb{D}}\left(|\varphi_{1}'(z)|-|\varphi_{2}'(z)|\right)^{2}dxdy\\
\leq\iint\limits _{\mathbb{D}}\left|\left|\varphi_{1}'(z)\right|^{2}-\left|\varphi_{2}'(z)\right|^{2}\right|dxdy
=
\iint\limits _{\mathbb{D}}\left|J_{\varphi_{1}}(z)-J_{\varphi_{2}}(z)\right|dxdy\\
=\iint\limits _{\mathbb{D^+}}\left(J_{\varphi_{1}}(z)-J_{\varphi_{2}}(z)\right)dxdy
+\iint\limits _{\mathbb{D^-}}\left(J_{\varphi_{2}}(z)-J_{\varphi_{1}}(z)\right)dxdy\\
=\Big(\left[{\rm meas}\,(\varphi_1(\mathbb D^+))-{\rm meas}\,(\varphi_2(\mathbb D^+))\right]+
\left[{\rm meas}\,(\varphi_2(\mathbb D^-))-{\rm meas}\,(\varphi_1(\mathbb D^-))\right]\Big)\,.
\nonumber
\end{multline}
\end{proof}

By combining Lemma \ref{Lemma 1}, Theorem \ref{thm:TwoWW}, equality (\ref{equality}), by applying the triangle inequality and taking into account that
$\frac{2s}{s-1}=\frac{4p}{p-2}$ for $s=\frac{2p}{p+2}$, we obtain the main result of this paper:

\begin{thm}
\label{thm:EstimateEigenBase}
Let $\varphi_{1}:\mathbb{D}\to\Omega_{1}$, $\varphi_{2}:\mathbb{D}\to\Omega_{2}$ be conformal mappings.
Suppose that $\left|\varphi_{1}'\right|,\left|\varphi_{2}'\right|\in L^{p}(\mathbb{D})$ for some $2<p\le\infty$.

Then
for any $n\in\mathbb{N}$
\begin{equation*}\label{main estimate}
|\lambda_{n}[\Omega_{1}]-\lambda_{n}[\Omega_{2}]|\leq c_{n}\Big[C\Big(\frac{4p}{p-2}\Big)\Big]^2\Big(\|\varphi_1'\mid L^{p}(\mathbb D)\|+
\|\varphi_2'\mid L^{p}(\mathbb D)\|\Big)\||\varphi'_1|-|\varphi'_2|\mid L^{2}(\mathbb D)\|\,,
\end{equation*}
 where $\Omega_{1}=\varphi_1(\mathbb D)$, $\Omega_{2}=\varphi_2(\mathbb D)$ and $c_{n}$ is defined by equality $(\ref{c_n})$.
\end{thm}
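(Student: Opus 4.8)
The plan is to assemble the desired inequality by chaining together the three ingredients already established, exactly as indicated: Theorem~\ref{thm:TwoWW}, Lemma~\ref{Lemma 1}, and the spectral equivalence (\ref{equality}). Since the heavy analytic work (the min--max manipulation behind Lemma~\ref{lem:TwoWeight}, the H\"older--Sobolev estimate behind Lemma~\ref{lem:TwoWeiPol}, and the Jacobian factorization behind Lemma~\ref{Lemma 1}) is already done, the remaining task is purely one of bookkeeping the exponents and verifying that the hypotheses of the cited results hold.

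First I would fix the exponent $s=\frac{2p}{p+2}$, which is precisely the value supplied by Lemma~\ref{Lemma 1}. I would check that $s$ meets the standing hypothesis $s>1$ of Theorem~\ref{thm:TwoWW}: the inequality $\frac{2p}{p+2}>1$ is equivalent to $p>2$, which holds by assumption. I would then record the algebraic identity that converts the Sobolev exponent: computing $s-1=\frac{p-2}{p+2}$ gives $\frac{2s}{s-1}=\frac{4p}{p-2}$, so the constant $C\!\left(\frac{2s}{s-1}\right)$ appearing in Theorem~\ref{thm:TwoWW} is exactly $C\!\left(\frac{4p}{p-2}\right)$. Because $|\varphi_1'|,|\varphi_2'|\in L^{p}(\mathbb{D})$ and $L^{p}(\mathbb{D})\subset L^{2}(\mathbb{D})$ on the bounded domain $\mathbb{D}$, the right-hand side of the bound in Lemma~\ref{Lemma 1} is finite, so $d_{s}(h_{1},h_{2})<\infty$ and the hypothesis of Theorem~\ref{thm:TwoWW} is satisfied.

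With these preliminaries in place, the conclusion follows by substitution. Theorem~\ref{thm:TwoWW} yields
\[
|\lambda_{n}[h_{1}]-\lambda_{n}[h_{2}]|\leq \tilde c_{n}\Big[C\Big(\tfrac{4p}{p-2}\Big)\Big]^{2}\,d_{s}(h_{1},h_{2}),
\]
and I would then insert the estimate for $d_{s}(h_{1},h_{2})$ from Lemma~\ref{Lemma 1}, using $\||\varphi_i'|\mid L^{p}(\mathbb{D})\|=\|\varphi_i'\mid L^{p}(\mathbb{D})\|$ together with the triangle inequality $\||\varphi_1'|+|\varphi_2'|\mid L^{p}(\mathbb{D})\|\le\|\varphi_1'\mid L^{p}(\mathbb{D})\|+\|\varphi_2'\mid L^{p}(\mathbb{D})\|$. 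Finally, by the equivalence (\ref{equality}) I would replace $\lambda_{n}[h_{i}]$ by $\lambda_{n}[\Omega_{i}]$, which identifies the left-hand side with $|\lambda_{n}[\Omega_{1}]-\lambda_{n}[\Omega_{2}]|$ and turns $\tilde c_{n}$ from (\ref{tilde c_n}) into $c_{n}$ from (\ref{c_n}). The only point demanding any care is the exponent arithmetic just described; there is no genuine analytic obstacle, since all of it has been absorbed into the preceding lemmas.
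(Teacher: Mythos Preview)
Your proposal is correct and matches the paper's own argument exactly: the paper simply states that the theorem follows by combining Lemma~\ref{Lemma 1}, Theorem~\ref{thm:TwoWW}, equality~(\ref{equality}), the triangle inequality, and the exponent identity $\frac{2s}{s-1}=\frac{4p}{p-2}$ for $s=\frac{2p}{p+2}$. Your write-up supplies the routine verifications (that $s>1$, that $d_s(h_1,h_2)<\infty$, and that $\tilde c_n$ becomes $c_n$) which the paper leaves implicit.
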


By Lemmas \ref{Lemma 1} and \ref{Lemma 2} follows the estimate in terms of the measure variation:
\begin{multline}\label{main estimate 2}
|\lambda_{n}[\Omega_{1}]-\lambda_{n}[\Omega_{2}]|\leq c_{n}\Big[C\Big(\frac{4p}{p-2}\Big)\Big]^2\Big(\|\varphi_1'\mid L^{p}(\mathbb D)\|+
\|\varphi_2'\mid L^{p}(\mathbb D)\|\Big)\times\\
\times\Big(\left[{\rm meas}\,(\varphi_1(\mathbb D^+))-{\rm meas}\,(\varphi_2(\mathbb D^+))\right]+
\left[{\rm meas}\,(\varphi_2(\mathbb D^-))-{\rm meas}\,(\varphi_1(\mathbb D^-))\right]\Big)^{\frac12}\,.
\nonumber
\end{multline}

\section{Quasidiscs}

Now we describe a rather wide class of plane domains for which there exist conformal mappings with Jacobians of the class
$L^p(\mathbb D)$ for some $p>1$, i.e. with complex derivatives of the class
$L^p(\mathbb D)$ for some $p>2$.

\begin{defn}
A homeomorphism $\varphi:\Omega_1\to\Omega_2$
between planar domains is called $K$-quasiconformal if it preserves
orientation, belongs to the Sobolev class $W_{loc}^{1,2}(\Omega_1)$
and its directional derivatives $\partial_{\alpha}$ satisfy the distortion inequality
$$
\max\limits_{\alpha}|\partial_{\alpha}\varphi|\leq K\min\limits_{\alpha}|\partial_{\alpha}\varphi|\,\,\,
\text{a.e. in}\,\,\, \Omega_1\,.
$$
\end{defn}
Infinitesimally, quasiconformal homeomorphisms transform circles to ellipses
with eccentricity uniformly bounded by $K$. If $K=1$ we recover
conformal homeomorphisms, while for $K>1$ plane quasiconformal mappings need
not be smooth.
\begin{defn}
A domain $\Omega$ is called a $K$-quasidisc if it is the image of the
unit disc $\mathbb{D}$ under a $K$-quasiconformal homeomorphism of
the plane onto itself.
\end{defn}

It is well known that the boundary of any $K$-quasidisc $\Omega$
admits a $K^{2}$-quasi\-con\-for\-mal reflection \cite{GH1} and thus, for example,
any conformal homeomorphism $\varphi:\mathbb{D}\to\Omega$ can be
extended to a $K^{2}$-quasiconformal homeomorphism of the whole plane
to itself.

The boundaries of quasidiscs are called quasicircles. It is known that there are quasicircles for which no segment has finite length.
The Hausdorff dimension of quasicircles was first investigated by F. W. Gehring and J. V\"ais\"al\"a  \cite{GV73},
who proved that it can take all values in the interval $[1,2)$. S. Smirnov proved recently \cite{Smi10} that the Hausdorff dimension of
any $K$-quasicircle is at most $1+k^2$, where $k = (K-1)/(K +1)$.

Ahlfors's 3-point condition \cite{Ahl63} gives
a complete geometric characterization of quasicircles: a Jordan curve $\gamma$ in the plane is a quasicircle
if and only if for each two points $a, b$ in $\gamma$ the (smaller) arc between them has 
diameter comparable with $|a-b|$. This condition is easily checked for the snowflake.
On the other hand, every quasicircle can be obtained by an explicit snowflake-type
construction (see \cite{Roh01}).

For any planar $K$-quasiconformal homeomorphism $\varphi:\Omega_1\rightarrow\Omega_2$
the following sharp result is known: $J(z,\varphi)\in L_{loc}^{p}(\Omega_{1})$
for any $p<\frac{K}{K-1}$ (\cite{G1, A}).
\begin{prop}
\label{prop:confQuasidisc}Any conformal mapping $\varphi:\mathbb{D}\to\Omega$
of the unit disc $\mathbb{D}$ onto a $K$-quasidisc $\Omega$ belongs to $L^{1,p}(\mathbb{D})$
for any $1\le p<\frac{2K^{2}}{K^2-1}$.
\end{prop}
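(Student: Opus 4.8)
Proposition \ref{prop:confQuasidisc} asserts that if $\varphi:\mathbb{D}\to\Omega$ is conformal onto a $K$-quasidisc $\Omega$, then $\varphi\in L^{1,p}(\mathbb{D})$ for every $1\le p<\frac{2K^2}{K^2-1}$; equivalently $\varphi'\in L^p(\mathbb{D})$, or $J_\varphi=|\varphi'|^2\in L^{p/2}(\mathbb{D})$, on this range.

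The plan is to reduce the statement to the sharp local integrability of Jacobians of planar quasiconformal homeomorphisms recalled just above, by exploiting the quasiconformal reflection available for quasidiscs. First I would use the fact that the boundary of a $K$-quasidisc admits a $K^{2}$-quasiconformal reflection \cite{GH1}, so that the given conformal map $\varphi:\mathbb{D}\to\Omega$ extends to a $K^{2}$-quasiconformal homeomorphism $\Phi:\mathbb{C}\to\mathbb{C}$ with $\Phi|_{\mathbb{D}}=\varphi$. The reason for passing to $\Phi$ is that the integrability theorem is stated for quasiconformal homeomorphisms of open planar domains, where one has room on both sides of $\partial\mathbb{D}$, whereas $\varphi$ is a priori defined only on the disc.

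Next I would apply the sharp result quoted in the text (\cite{G1, A}) to the $K^{2}$-quasiconformal homeomorphism $\Phi$: its Jacobian satisfies $J_{\Phi}\in L^{q}_{loc}(\mathbb{C})$ for every $q<\frac{K^{2}}{K^{2}-1}$. Since $\overline{\mathbb{D}}$ is compact, covering it by finitely many discs on which $J_{\Phi}\in L^{q}$ and adding up gives $J_{\Phi}\in L^{q}(\mathbb{D})$ for each such $q$. On $\mathbb{D}$ the map $\Phi$ coincides with the holomorphic map $\varphi$, so there the real Jacobian is $J_{\Phi}=|\varphi'|^{2}$; hence $|\varphi'|^{2}\in L^{q}(\mathbb{D})$, that is $|\varphi'|\in L^{2q}(\mathbb{D})$.

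Finally, writing $p=2q$ and letting $q$ range over the interval $\bigl(0,\frac{K^{2}}{K^{2}-1}\bigr)$ yields $|\varphi'|\in L^{p}(\mathbb{D})$, that is $\varphi\in L^{1,p}(\mathbb{D})$, for every $p<\frac{2K^{2}}{K^{2}-1}$, which is exactly the claimed range (the low exponents $q<1$ are harmless, since $\mathbb{D}$ has finite measure and $J_{\varphi}=|\varphi'|^{2}$ is always integrable, integrating to $\mathrm{meas}\,\Omega<\infty$). The essential point is the extension step combined with correct exponent bookkeeping: it is the passage through a $K^{2}$-quasiconformal, rather than merely $K$-quasiconformal, homeomorphism that produces the bound $\frac{2K^{2}}{K^{2}-1}$ in place of $\frac{2K}{K-1}$, and it is the squaring $J_{\varphi}=|\varphi'|^{2}$ that turns the Jacobian exponent $\frac{K^{2}}{K^{2}-1}$ into the derivative exponent $\frac{2K^{2}}{K^{2}-1}$. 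The local-to-global passage on $\mathbb{D}$ is routine by compactness, so I expect no serious obstacle beyond invoking the two cited results with the correct constant.
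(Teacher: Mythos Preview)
Your proposal is correct and follows essentially the same route as the paper: extend $\varphi$ by the $K^{2}$-quasiconformal reflection to a global $K^{2}$-quasiconformal homeomorphism, invoke the Gol'dshtein--Astala integrability theorem for its Jacobian, and convert the Jacobian exponent into the derivative exponent via $J_{\varphi}=|\varphi'|^{2}$. Your write-up is in fact more explicit than the paper's about the local-to-global step on $\overline{\mathbb{D}}$ and about the exponent bookkeeping, but the underlying argument is the same.
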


\begin{proof} Any conformal mapping $\varphi:\mathbb{D}\to\Omega$ can be extended to a $K^2$ quasiconformal homeomorphism  $\psi$
of the whole plane to the whole plane by reflection.
Since the domain $\Omega$ is bounded, $\psi$ belongs to the class $L^p(\Omega)$ for any $1\le p<\frac{2K^2}{K^2-1}$ (\cite{G1}, \cite{A}).
Therefore $\varphi$ belongs to the same class.
\end{proof}

Denote, for $K\geq 1$, by $A_K$ the class of all $K$-quasidiscs.
Theorem \ref{thm:EstimateEigenBase} and Proposition \ref{prop:confQuasidisc}  imply the following statement.

\begin{thm}
For any $K\geq 1$ there exists $p>2$ and $M>0$ such that, for any qiasidiscs $\Omega_1,\Omega_2\in A_K$ and conformal mappings 
$\varphi_k:\mathbb D\to\Omega_k$, k=1,2,
$|\varphi_1'|, |\varphi_2'|\in L^{p}(\mathbb D)$ and for any $n\in\mathbb{N}$
\begin{equation*}
|\lambda_{n}[\Omega_{1}]-\lambda_{n}[\Omega_{2}]|
\leq c_{n} M\Big(\|\varphi_1'\mid L^{p}(\mathbb D)\|+
\|\varphi_2'\mid L^{p}(\mathbb D)\|\Big)\,\|\varphi_1'-\varphi_2'\mid L^{2}(\mathbb D)\|\,,
\end{equation*}
 where $c_{n}$ is defined by equality $(\ref{c_n})$.
\end{thm}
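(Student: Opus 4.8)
The plan is to combine Proposition~\ref{prop:confQuasidisc} with Theorem~\ref{thm:EstimateEigenBase}, and then to replace the quantity $\||\varphi_1'|-|\varphi_2'|\mid L^2(\mathbb D)\|$ appearing in the latter by the larger quantity $\|\varphi_1'-\varphi_2'\mid L^2(\mathbb D)\|$.

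First I would fix the integrability exponent. For every $K\ge 1$, Proposition~\ref{prop:confQuasidisc} guarantees that any conformal map of $\mathbb D$ onto a $K$-quasidisc belongs to $L^{1,p}(\mathbb D)$ for each $1\le p<\frac{2K^2}{K^2-1}$ (with the convention that this upper bound is $+\infty$ when $K=1$). Since $\frac{2K^2}{K^2-1}>2$ for every $K\ge 1$, I can select a single exponent $p=p(K)\in\big(2,\frac{2K^2}{K^2-1}\big)$ depending only on $K$. With this choice, for arbitrary $\Omega_1,\Omega_2\in A_K$ and conformal mappings $\varphi_k:\mathbb D\to\Omega_k$ one has $|\varphi_1'|,|\varphi_2'|\in L^p(\mathbb D)$, so the hypotheses of Theorem~\ref{thm:EstimateEigenBase} are satisfied.

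Next I would apply Theorem~\ref{thm:EstimateEigenBase} verbatim, which gives
\[
|\lambda_n[\Omega_1]-\lambda_n[\Omega_2]|\le c_n\Big[C\Big(\frac{4p}{p-2}\Big)\Big]^2\Big(\|\varphi_1'\mid L^p(\mathbb D)\|+\|\varphi_2'\mid L^p(\mathbb D)\|\Big)\||\varphi_1'|-|\varphi_2'|\mid L^2(\mathbb D)\|.
\]
Setting $M:=\big[C(\frac{4p}{p-2})\big]^2$, which depends only on $K$ through the fixed exponent $p$, produces a constant of the required form.

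Finally, to pass from $\||\varphi_1'|-|\varphi_2'|\mid L^2(\mathbb D)\|$ to $\|\varphi_1'-\varphi_2'\mid L^2(\mathbb D)\|$, I would invoke the pointwise reverse triangle inequality $\big||\varphi_1'(z)|-|\varphi_2'(z)|\big|\le|\varphi_1'(z)-\varphi_2'(z)|$ for the complex derivatives and integrate its square over $\mathbb D$, obtaining $\||\varphi_1'|-|\varphi_2'|\mid L^2(\mathbb D)\|\le\|\varphi_1'-\varphi_2'\mid L^2(\mathbb D)\|$. Substituting this into the previous display completes the bound. Each step is a direct citation of an already established result or an elementary inequality, so I expect no substantial obstacle; the only point deserving attention is the verification that the admissible exponent $p$ can be taken strictly greater than $2$ uniformly over the entire class $A_K$, which is precisely what Proposition~\ref{prop:confQuasidisc} provides.
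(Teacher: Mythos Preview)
Your proposal is correct and follows essentially the same route as the paper: invoke Proposition~\ref{prop:confQuasidisc} to fix an exponent $p>2$ depending only on $K$, apply Theorem~\ref{thm:EstimateEigenBase}, and set $M=\big[C\big(\tfrac{4p}{p-2}\big)\big]^2$. The paper makes the specific choice $p=\tfrac{2K^2-1}{K^2-1}$ (hence $M=[C(4(2K^2-1))]^2$) and leaves the reverse triangle inequality step implicit, whereas you spell it out; otherwise the arguments coincide.
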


\begin{proof}
Since $\frac{2K^2}{K^2-1}>2$, by Proposition \ref{prop:confQuasidisc} there exists $2<p<\frac{2K^2}{K^2-1}$,
say $p=\frac{2K^2-1}{K^2-1}$, such that $|\varphi'_1|,|\varphi'_2|\in L^p(\mathbb D)$. Therefore, by 
Theorem \ref{thm:EstimateEigenBase} the statement follows with, say $p=\frac{2K^2-1}{K^2-1}$ and 
$$
M=\left[C\left(\frac{4p}{p-2}\right)\right]^2=\left[C\left(4(2K^2-1)\right)\right]^2.
$$ 
\end{proof}

Acknowledgments.

The authors thank the anonymous reviewers for careful reading of the paper and really valuable comments.


\begin{thebibliography}{[1]}
  
\bibitem{Ahl63} L. V. Ahlfors, Quasiconformal reflections. Acta Math. 109 (1963), 291-301,

\bibitem{A} K.~Astala, Area distortion of quasiconformal mappings.
Acta Math. 173 (1994), 37--60

\bibitem{BBL}  G.~Barbatis, V.~I.~Burenkov, P.~D.~Lamberti, Stability estimates for resolvents, eigenvalues, and eigenfunctions of elliptic operators on variable domains.
Around research of Vladimir Maz'ya. II. International Mathematical Series,  Springer, New York  11 (2009), 23--60.

\bibitem{BrL1}  G.~Barbatis, P.~D.~Lamberti, Spectral stability estimates for elliptic operators subject to domain transformations with non-uniformly bounded gradients, Mathematika 58 (2012), 324--348.

\bibitem{BM} A.~F.~Beardon, D.~Minda, The hyperbolic metric
and geometric function theory. Quasiconformal mappings and their applications,
Narosa, New Delhi (2007), 9--56.

\bibitem{BP} J.~Becker, C.~Pommerenke, H\"older continuity of conformal maps with quasiconformal extensions, 
Complex Variable Theory Appl. 10 (1988), 267-272.

\bibitem{BL1}  V.~I.~Burenkov, P.~D.~Lamberti, Spectral stability of Dirichlet second order uniformly elliptic operators.
J. Differential Equations 244 (2008), 1712--1740.

\bibitem{BL2}  V.~I.~Burenkov, P.~D.~Lamberti,  Spectral stability of higher order uniformly elliptic differential operators.
Sobolev spaces in mathematics. II.  International Mathematical Series,  Springer, New York { 9} (2009), 69--102.

\bibitem{BL3}  V.~I.~Burenkov, P.~D.~Lamberti, Spectral stability estimates via the Lebesgue measure
of domains for higher order
elliptic operators. Revista Matem\'atica Compultense 25 (2012), 435--457.

\bibitem{BLL} V. I. Burenkov,  P. D. Lamberti,  M. Lanza de Cristoforis, Spectral stability of non-negative self-adjoint operators.
Sovrem. Mat. Fundam. Napravl. 15 (2006), 76--111 (in Russian). English transl. in Journal of Mathematical Sciences 149 (2006).


\bibitem{D} E.~B. Davies, Sharp boundary estimates for elliptic operators, Math. Proc.
Camb. Phil. Soc., 129 (2000), 165--178.

\bibitem{GH1}  F.~W.~Gehring, K.~Hag,   Reflections on reflections in quasidisks, Report. Univ. Jyv\"askyl\"a, 83 (2001), 81--90.

\bibitem{GV73}  F.~W.~Gehring, J.~V\"{a}is\"{a}l\"{a},   Hausdorff dimension and quasiconformal mappings, Journal of the London Mathematical Society 6 (1973), 504--512.

\bibitem{G1} V.~M.~Gol'dshtein, The degree of summability of generalized
derivatives of quasiconformal homeomorphisms. Sibirsk. Matem.
Zh. 22 (1981), no. 6, 22--40 (in Russian). (English transl.: Siberian Math. J. 22 (1981), no.
6, 821--836).

\bibitem{GGu} V.~Gol'dshtein, L.~Gurov, Applications of change
of variables operators for exact embedding theorems, Integral Equations
Operator Theory 19 (1994), 1--24.

\bibitem{GU4} V.~Gol'dshtein, A.~Ukhlov, Conformal weights and
Sobolev embeddings, J. Math. Sci. (N. Y.) 193 (2013), 202--210.

\bibitem{GU5} V.~Gol'dshtein, A.~Ukhlov, Sobolev homeomorphisms
and Brennan's conjecture,  Comput. Methods Funct. Theory 14 (2014), 247--256.

\bibitem{GU6} V.~Gol'dshtein, A.~Ukhlov, Brennan's conjecture for composition operators on Sobolev spaces. 
Eurasian Math. J. 3 (2012), no. 4, 35--43.

\bibitem{KOT} P.Koskela, J.~Onninen, J.~T.~Tyson, 
Quasihyperbolic boundary conditions and capacity: Poincar\'e domains 
Math. Ann., 323 (2002), 811--830.

\bibitem{LL} P.~D. Lamberti, M. Lanza de Cristoforis, 
A global Lipschitz continuity
result for a domain dependent Dirichlet eigenvalue problem for the Laplace
operator.  J. Funct. Anal. 155 (1998), 64--108.

\bibitem{LP} P.~D.~Lamberti, M.~Perin, On the sharpneess of a certain spectral stability estimate for the Dirichlet Laplacian.
Eurasian Math. J. 1  (2010), no. 1,  111--122.

\bibitem{LM1} R.~S. Laugesen, C.~Morpurgo, 
Extremals for eigenvalues of Laplacians under conformal mapping. J. Funct. Anal. 155 (1998), 64--108.

\bibitem{P} M.~M.~H. Pang, Approximation of ground state eigenvalues and eigenfunctions
of Dirichlet Laplacians, Bull. London Math. Soc. 29 (1997), 720--730.

\bibitem{Roh01} S.~Rohde. Quasicircles modulo bilipschitz maps. Rev. Mat. Iberoamericana,
17 (2001), no. 3, 643--659.

\bibitem{Smi10} S.~Smirnov. Dimension of quasicircles. Acta Math., 205 (2010), no. 1, 189--197.

\bibitem{U1} A.~Ukhlov, On mappings, which induce embeddings of
Sobolev spaces, Siberian Math. J. 34 (1993), 185--192.

\bibitem{VU1} S.~K.~Vodop'yanov, A.~D.~Ukhlov, Superposition
operators in Sobolev spaces. Russian Mathematics (Izvestiya VUZ) 46 (2002), no.
4, 11--33.

  
\end{thebibliography}
\end{document}